%% file: Pierer_admpc.tex
\documentclass[twocolumn]{autart}    % Enable this line and disable the 
\usepackage{graphicx}          % Include this line if your 
\input{commands.tex}

\begin{document}

\begin{frontmatter}
%\runtitle{Insert a suggested running title}  % Running title for regular 
                                              % papers but only if the title  
                                              % is over 5 words. Running title 
                                              % is not shown in output.

\title{Asynchronous Sensitivity-Based Distributed NMPC\thanksref{footnoteinfo}} % Title, preferably not more 
                                                % than 10 words.

\thanks[footnoteinfo]{This paper was not presented at any IFAC 
meeting. Corresponding author M. Pierer v. Esch.}

\author[Erlangen]{Maximilian Pierer von Esch}\ead{maximilian.v.pierer@fau.de},    % Add the 
\author[Erlangen]{Andreas V\"olz}\ead{andreas.voelz@fau.de},               % e-mail address 
\author[Erlangen]{Knut Graichen}\ead{knut.graichen@fau.de}  % (ead) as shown

\address[Erlangen]{Chair of Automatic Control, Friedrich-Alexander-Universit\"at Erlangen-N\"urnberg}  % Please supply  % full addresses       % here.

\begin{keyword}                           % Five to ten keywords,  
model predictive control, distributed and asynchronous optimization, multi-agent systems, distributed control              % chosen from the IFAC 
\end{keyword}                             % keyword list or with the 
                                          % help of the Automatica 
                                          % keyword wizard

\begin{abstract}                          % Abstract of not more than 200 words.
\input{content/abstract.tex}
\end{abstract}

\end{frontmatter}
 
\input{content/introduction.tex}
\input{content/problem_statement.tex}
\input{content/asynchronous_dmpc.tex}
\input{content/stability_analysis.tex}
\input{content/validation.tex}

\input{content/conclusions.tex}

\bibliographystyle{plain}        % Include this if you use bibtex 
\bibliography{Pierer_admpc_bib}           % and a bib file to produce the 
                                 % bibliography (preferred). The
                                 % correct style is generated by
                                 % Elsevier at the time of printing.

                                        % in the appendices.
\end{document}

%% file: commands.tex
\usepackage[utf8]{inputenc}
\usepackage{amsmath,amsfonts,amssymb}
\usepackage{dsfont}
\usepackage{algorithm}
\usepackage{algpseudocode}
\usepackage[per-mode=fraction]{siunitx}
\usepackage{tikz}
\usepackage{tabularx}
\usepackage{accents}
\usepackage{pgfplots}
\usepackage{siunitx}
\usepackage{url}
\pgfplotsset{compat=1.18}
\usepgfplotslibrary{groupplots}

\usepackage[T1]{fontenc}      % use T1 fonts (so accented letters are printable)
\usepackage[utf8]{inputenc}   % source file is UTF‑8

\newcommand*{\vm}[1]{\boldsymbol{#1}}
\newcommand*{\trans}{^{\top}}
\newcommand*{\dd}{\mathrm{d}}

\newcommand*{\neighs}{j \in \mathcal{N}_i}
\newcommand*{\neighsi}{j \in \mathcal{\bar N}_i}

\newcommand*{\agents}{i \in \mathcal{V}}
\newcommand*{\Ni}[1]{\vm{#1}_{\mathcal N_i}}

\newcommand*{\inds}{^{*}}

\newcommand*{\indq}{^{q}}

\newcommand*{\indqin}{^{q_i+1}}
\newcommand*{\indqjn}{^{q_j+1}}
\newcommand*{\indqi}{^{q_i}}
\newcommand*{\indqj}{^{q_j}}

\newcommand*{\delayindex}{^{q - d_{ij}(q)}}

\newcommand{\nablaxi}{\nabla_{\vm x_i}}

\newcommand*{\floor}[1]{\lfloor #1 \rfloor}
\newcommand{\contr}{C^{\floor{\frac{q}{s+d}}}}

\newcommand*{\st}{\operatorname{s.\!t.}}

\newcommand*{\indqn}{^{q+1}}

\newcommand*{\timetau}{\tau \in [0,T]}
\newcommand*{\timetausample}{\tau \in [0,\Delta t)}

\newcommand*{\thisstate}{\vm \xi }

\DeclareMathOperator*{\spec}{\mathrm{spec}}

\definecolor{mycolor1}{rgb}{0.00000,0.44700,0.74100} %blue 
\definecolor{mycolor2}{rgb}{0.85000,0.32500,0.09800} %red 

\pgfplotscreateplotcyclelist{histogramm}{ % Reihenfolge für Linienstyles
	{mycolor1,thick},
	{mycolor2,thick},
}

%% file: content/abstract.tex
This paper presents a cooperative distributed model predictive control (MPC) scheme for nonlinear continuous-time systems. The centralized optimal control problem is solved asynchronously via a fixed number of sensitivity-based distributed programming (SBDP) iterations. The proposed scheme requires only neighbor-to-neighbor communication and no synchronization between agents during optimization. Under nominal MPC stability and bounded information delay, local exponential stability is established for a sufficiently large number of per-agent SBDP iterations. Numerical and hardware-in-the-loop results on both Ethernet and Wi-Fi demonstrate the benefits of an asynchronous execution, reducing execution times by over $60\,\%$ while maintaining comparable closed loop performance.

%% file: content/introduction.tex
\section{Introduction}
The distributed operation of large-scale interconnected systems is a central challenge in modern cyber-physical applications, including energy systems \cite{Worthmann}, process engineering \cite{Liu}, and autonomous mobile robots \cite{Rosenfelder}. Model predictive control (MPC) provides a systematic framework for their control through the repeated solution of an optimal control problem (OCP) which accounts for coupled system dynamics and constraints. However, the centralized solution of the resulting high-dimensional OCPs quickly becomes intractable in real-time MPC applications. Distributed model predictive control (DMPC) addresses this issue by exploiting the structure of large-scale systems and distributing the computational effort among multiple agents. In particular, cooperative DMPC seeks to retain the stability properties and performance of centralized MPC while preserving the scalability and modularity of distributed implementations. In contrast, non-cooperative schemes require less communication but usually do not achieve the same quality as centralized control \cite{Muller}. For cooperative DMPC, a variety of distributed optimization algorithms have been utilized, such as dual decomposition \cite{Giselsson}, the alternating direction method of multipliers \cite{Boyd}, decentralized sequential quadratic programming \cite{Stomberg}, and distributed sensitivity-based programming (SBDP) \cite{Pierer2}.
However, most existing cooperative DMPC schemes implicitly rely on a synchronous execution of these algorithms, requiring strict coordination between agents and introducing idle times due to communication delays and heterogeneous computational loads. To address these limitations, asynchronous distributed optimization methods have recently gained attention, allowing agents to update using locally available and potentially outdated information \cite{Bertsekas,Pierer2}. While this reduces waiting times and synchronization overhead, it also introduces additional challenges in terms of convergence and closed loop stability, particularly in real-time MPC implementations where only a finite number of iterations can be performed per sampling step.

This paper addresses these challenges and develops a cooperative DMPC scheme where a centralized OCP is solved in an asynchronous fashion via SBDP. Asynchronism is restricted to the optimization layer, i.e., SBDP iterations are performed asynchronously within each MPC step while the MPC steps are synchronized between agents. This, for example, formally enables the use of best-effort UDP-based communication protocols instead of slower connection-oriented protocols like TCP, representing an important step toward real-time DMPC.  Building on R-linear convergence properties of the asynchronous execution of SBDP \cite{Pierer2}, we derive conditions on the number of per-agent iterations to ensure local exponential stability of the closed loop system despite inexact optimization. The standing assumptions are that the centralized MPC formulation is stabilizing and that the delay of the information in the network is upper bounded. The stability analysis is inspired by recent suboptimal (D)MPC investigations, including MPC frameworks with a fixed number of optimizer iterations per sampling step \cite{Graichen,Liao-McPherson}, the class of real-time iterations (RTI) \cite{Diehl,Zanelli}, and stability analyses for synchronous DMPC \cite{Bestler,Pierer,Stomberg}. In contrast to these works, we consider asynchronous distributed optimization with an R-linearly convergent algorithm in a continuous-time DMPC framework. Moreover, the optimization error is not required to decrease monotonically, rendering standard RTI arguments based on Q-linear convergence not directly applicable.
The rest of the paper is structured as follows: Section \ref{sec:problem_statement} introduces the problem statement, while Section \ref{sec:async_DMPC} developes the DMPC scheme. Afterward, stability of the closed loop is analyzed in Section \ref{sec:stability_analysis}, before a simulative and hardware-in-the-loop validation is given in Section~\ref{sec:validation}. Concluding remarks are made in Section~\ref{sec:conclusion}.

Notation: The stacking of vectors $\vm v_i \in \mathbb{R}^{n_i}$, $\agents$ from a set $\mathcal{V} \subset \mathbb{N}$ is denoted as $[\vm v_i]_{\agents} = [\vm v_1\trans, \dots, \vm v_{|\mathcal{V}|}\trans]\trans$, while for $\vm g_i(\cdot) \in C([0,T];\mathbb{R}^{n_i})$ it is $ \vm g(\cdot)=(\vm g_i(\cdot))_{\agents}$ with $\vm g(t)= [ \vm g_1(t)\trans,\dots, \vm g_{|\mathcal{V}|}(t)\trans]\trans$, $\forall t \in [0,T]$. The block-maximum norm of a vector $[\vm v_i]_{\agents} \in \mathbb{R}^n$ is given as $\|\vm v \|_b = \max_{\agents} \|\vm v_i\|_2$, while for vector functions $\vm g(\cdot) \in C([0,T];\mathbb{R}^n)$ we use the supremum norm $\|\vm g(\cdot)\|_{b,\infty} = \sup_{t \in [0,T]}\|\vm g(t)\|_b$. An open neighborhood of a point $\vm v_0 \in \mathbb{R}^n$ is defined as $\mathcal{B}_r(\vm v_0):=\{\vm v\in \mathbb{R}^n\, |\, \| \vm v - \vm v_0\|_b<r\}$, while for $ \vm g_0(\cdot) \in C([0,T];\mathbb{R}^n)$ it is given as $\mathcal{B}_r(\vm g_0(\cdot)):=\{ \vm g(\cdot) \in C([0,T];\mathbb{R}^n)\,|\, \| \vm g(\cdot) - \vm g_0(\cdot)\|_{b,\infty}<r\}$. 

%% file: content/problem_statement.tex
\section{Problem statement} \label{sec:problem_statement}
We consider multi-agent systems described by an undirected, connected graph $\mathcal{G}(\mathcal{V}, \mathcal{E})$ where each vertex $ i \in \mathcal{V}={1,\dots,M}$ represents an agent and the edge set $\mathcal{E} \subset \mathcal{V} \times \mathcal{V}$ characterizes the coupling structure between subsystems. For each agent $i$, the open neighborhood is defined as $\mathcal{N}_i := \{j \in \mathcal{V}\setminus\{i\}\, |\, (i,j) \in \mathcal{E}\}$ and contains all subsystems directly coupled to agent $i$, while the closed neighborhood is denoted as $\mathcal{\bar N}_i := \mathcal{N}_i \cup \{i\}$. We consider a DMPC scheme for setpoint stabilization where the agents cooperatively solve the centralized OCP 
\begin{subequations} \label{eq:central_OCP}
\begin{alignat}{2}
    \min_{\vm {\bar u}(\cdot)} \quad 
    & \sum_{\agents} J_i(\vm {\bar u}_i(\cdot); (\vm {\bar x}_j(\cdot))_{\neighs})  \\
\st \quad
    & \vm {\dot{\bar x}}_i (\tau) = \vm f_{i}( \vm {\bar x}_i(\tau), \vm {\bar u}_i(\tau), [\vm {\bar x}_j(\tau)]_{\neighs})\label{eq:central_OCP_dynamics}\\
    & \vm {\bar x}_i(0) = \vm \xi_i \\
    & \vm{\bar u}_i(\tau) \in \mathbb{U}_i\,, \quad \timetau\,, \quad \forall\agents
\end{alignat}
\end{subequations}
in a distributed and possibly asynchronous fashion at each sampling time $t_k = t_0 + k \Delta t$, $k \in \mathbb{N}_0$ for a fixed sampling period $\Delta t>0$. The state and input of each agent are denoted as $\vm x_i(t) \in \mathbb{R}^{n_i}$ and $\vm u_i(t) \in \mathbb{R}^{m_i}$, respectively. The bar notation denotes optimization variables depending on the prediction time $\timetau$, in contrast to system variables depending on system time $t>0$. OCP \eqref{eq:central_OCP} is parametrized by the initial condition $ \vm \xi_i \in \mathbb{R}^{n_i}$, which is instantiated at each sampling time $ t = t_k$ as $ \vm \xi_i = \vm x_{i,k}= \vm{{x}}_i(t_k)$. The local constraint set $\mathbb{U}_i \subset \mathbb{R}^{m_i}$ is supposed to be nonempty, compact and convex. The local cost functional is given as
\begin{equation} \label{eq:agent_costs}
    J_i \!= L_{i}( \vm {\bar x}_i(T)) +\! \int_{0}^{T}\! l_i(\vm {\bar x}_i(\tau), \vm {\bar u}_i(\tau), [\vm {\bar x}_j(\tau)]_{\neighs})\, \dd\tau 
\end{equation}
with prediction horizon $T\geq \Delta t$. Furthermore, we presuppose a neighbor-affine structure for costs \eqref{eq:agent_costs} and dynamics \eqref{eq:central_OCP_dynamics}, i.e., $\vm f_i = \vm f_{ii}(\vm x_i, \vm u_i) + \sum_{\neighs} \vm f_{ij}(\vm x_i, \vm x_j)$ and $l_i = l_{ii}(\vm x_i, \vm u_i) + \sum_{\neighs} l_{ij}(\vm x_i, \vm x_j)$. 
The centralized quantities are obtained by stacking all local variables as $\vm x(t) = [\vm x_i(t)]_{\agents} \in \mathbb{R}^n$, $\vm u(t)= [\vm u_i(t)]_{\agents} \in \mathbb{R}^m$, $ \vm \xi = [\vm \xi_i]_{\agents} \in \mathbb{R}^n$, $ \vm x_k = [\vm x_{i,k}]_{\agents} \in \mathbb{R}^n$, and $\mathbb{U} = \Pi_{\agents }\mathbb{U}_i \subset \mathbb{R}^m$, with $n= \sum_{\agents} n_i$ and $m = \sum_{\agents} m_i$. A majority of the subsequent stability analysis is based on an equivalent centralized representation of OCP~\eqref{eq:central_OCP} 
\begin{subequations} \label{eq:ocp}
\begin{align}
    V(\thisstate) = \min_{\vm {\bar u}(\cdot)} \quad 
    & L(\vm {\bar x}(T)) + \int_{0}^{T} l(\vm {\bar x}(\tau), \vm {\bar u}(\tau))\,\dd \tau  \label{eq:costs_ocp} \\
\st \quad
    & \vm {\dot{ \bar x}} (\tau) = \vm f( \vm {\bar x}(\tau), \vm {\bar u}(\tau))\,,\, \vm {\bar x}(0) = \thisstate \label{eq:dynamics_ocp} \\
    & \vm {\bar u}(\tau) \in \mathbb{U}\,, \quad \timetau
\end{align}
\end{subequations}
which is obtained by appropriately stacking the local quantities. Specifically, the centralized dynamics are given as $ \vm f: \mathbb{R}^{n} \times \mathbb{R}^{m} \rightarrow \mathbb{R}^{n}$ with $\vm f(\vm x, \vm u) := [\vm f_i(\vm x_i, \vm u_i, [\vm x_j]_{\neighs})]_{\agents} $ and are supposed to be at least twice continuously differentiable. Without loss of generality the origin is an equilibrium of the system for $\vm u=\vm 0$, i.e., $\vm f(\vm 0, \vm 0) = \vm 0$.
The objective \eqref{eq:costs_ocp} consists of the terminal costs $L : \mathbb{R}^{n} \rightarrow \mathbb{R}$  and integral costs $l : \mathbb{R}^{n} \times \mathbb{R}^{m} \rightarrow \mathbb{R}$ with  $ L(\vm x) := [L_i(\vm x_i)]_{\agents} $ as well as $ l(\vm x, \vm u) := [l_i(\vm x_i, \vm u_i, [\vm x_j]_{\neighs})]_{\agents} $ and are supposed to be at least twice continuously differentiable. Furthermore, we assume $\vm {\bar u}(\cdot)\in C([0,T];\mathbb U)$. For a time $\timetau$, initial condition  $\vm {\bar x}(0)=\thisstate$, and input trajectory $\vm {\bar  u}(\cdot)$, define $\vm \Psi: [0,T]\times \mathbb R^n\times C([0,T];\mathbb U) \rightarrow \mathbb R^n $ as the solution map of \eqref{eq:dynamics_ocp}, i.e., $ \vm \Psi(\tau;\thisstate, \vm {\bar  u}(\cdot)) := \vm x(\tau;\thisstate,\vm {\bar u}(\cdot))$. We require the existence of optimal control trajectories $\vm { \bar u}\inds(\cdot; \thisstate)$ of the central OCP \eqref{eq:ocp} and the corresponding state trajectory $\vm {\bar x}\inds(\cdot; \thisstate) := \vm \Psi(\cdot;\thisstate,\vm {\bar u}\inds(\cdot; \vm \xi))$. Specifically, we assume the existence of a compact set $\mathbb{X}_0 \subset \mathbb{R}^n$ with $\vm 0 \in \mathbb{X}_0$ and a set $ \mathbb{T}:=(0, \hat T]$ with some $\hat T \in \mathbb{R}_{>\Delta t}$ such that for all $\thisstate \in \mathbb{X}_0$ and $T \in \mathbb{T}$, the OCP~\eqref{eq:ocp} has a locally unique minimizer $(\vm {\bar u}\inds(\cdot; \thisstate),\vm {\bar x}\inds(\cdot;\thisstate))$.
The value function of OCP \eqref{eq:ocp} is denoted as $V: \mathbb{X}_0 \rightarrow \mathbb{R}$.

Standard MPC schemes assume that the optimal solution is available at each sampling time $t_k$. Then, the initial segment of $\vm {\bar u}\inds(\tau; \vm x_k)$ is applied to the system, i.e.,
\begin{equation} \label{eq:MPC_control_law}
    \vm u(t_k + \tau) = \vm {\bar u}\inds(\tau; \vm x_k)\,, \quad \timetausample\,.
\end{equation}
At the next sampling instant $t_{k+1}$, OCP \eqref{eq:ocp} is resolved using the updated initial condition which, in the nominal case, is given by $\vm x\inds_{k+1} = \vm \Psi(\Delta t;\vm x_k,\vm {\bar u}\inds(\cdot; \vm x_k))$. 
In multi-agent systems with many agents, it increasingly becomes intractable to solve the OCP \eqref{eq:central_OCP} in a centralized fashion. Therefore, the remainder of this paper is concerned with the distributed and asynchronous solution of OCP \eqref{eq:central_OCP} and the stability analysis of the closed loop resulting from the proposed DMPC control law. 

%% file: content/asynchronous_dmpc.tex
\section{Asynchronous sensitivity-based DMPC} \label{sec:async_DMPC}
This section presents the distributed solution of OCP~\eqref{eq:central_OCP} via asynchronous SBDP (a-SBDP) within a real-time DMPC framework. The algorithm exploits sensitivity information between neighboring agents to augment the local cost functional and is based on primal decomposition to achieve a coordinated decoupling between agents. 
\subsection{Asynchronous SBDP and DMPC control law}
Each agent $\agents$ constructs a local decoupled OCP, which is solved iteratively in each per-agent SBDP iteration $q_i = 0,1,\dots$ 
\begin{subequations}\label{eq:local_OCP_alg_async}
\begin{alignat}{2}
                        \min_{\vm {\bar u}_i(\cdot)} \quad&  \bar J_i\indqi(\vm {\bar u}_i(\cdot); \vm \xi_i):= J_i(\vm {\bar u}_i(\cdot); (\vm { \bar x}_j\indqj(\cdot))_{\neighs}, \vm \xi_i) \label{eq:local_OCPalg_cost_async}\\
& +\sum_{\neighs} \int_0^T \vm g_{ji}(\vm {\bar x}_i\indqi(\tau), \vm {\bar x}_j\indqj(\tau), \vm {\bar \lambda}_j\indqj(\tau))\trans \delta \vm {\bar x}_i(\tau)\,\dd \tau \nonumber      \\               
                      ~\st \quad&\vm{\dot {\bar x}}_i(\tau) =\vm f_{i}( \vm {\bar x}_i(\tau), \vm {\bar u}_i(\tau), [\vm {\bar x}_j\indqj(\tau)]_{\neighs}) \label{eq:local_OCPalg_dyn_async}\\
                  & \vm {\bar x}_i(0) = \vm \xi_i,\, \quad \vm {\bar u}_i(\tau)\in \mathbb{U}_i\,,\quad \timetau \label{eq:local_OCPalg_constraint_async}
\end{alignat}
\end{subequations}
with $\delta \vm {\bar x}_i(\tau) = \vm {\bar x}_i(\tau) - \vm {\bar x}_i\indqi(\tau)$ and where $\vm g_{ji}(\cdot)$ is given by
\begin{equation}\label{eq:gradient}
    \vm g_{ji}(\vm x_i, \vm x_j, \vm \lambda_j) \!=\! \nablaxi l_{ji}(\vm x_j,\vm x_i) \!+\! \nablaxi f_{ji}(\vm x_j, \vm x_i)\trans \vm \lambda_j
\end{equation}
and can be interpreted as the sensitivity of the cost functional $J_j(\cdot)$ w.r.t.\ a change in the state trajectory $\vm {\bar x}_i(\cdot)$, $\agents$. In \eqref{eq:gradient}, $\vm {\bar \lambda}_j(\tau) \in \mathbb{R}^{n_j}$ denotes the adjoint state of the neighbor associated with the local dynamics \eqref{eq:local_OCPalg_dyn_async} which is typically available from the solution of \eqref{eq:local_OCP_alg_async}. Otherwise, $\vm { \bar \lambda}_i\indqin(\cdot)$ may be computed by solving
\begin{equation} \label{eq:Hamiltonian}
    \vm {\dot {\bar \lambda}}_i\indqin\!(\tau) \!=\! - \nablaxi H_i\indqi(\vm {\bar x}_i\indqin(\tau), \vm {\bar u}_i\indqin(\tau), \vm {\bar \lambda}_i\indqin(\tau))
\end{equation}
with the local Hamiltonian $H_i\indqi := l_i(\vm x_i, \vm u_i, [\vm x_j\indqj]_{\neighs}) + \vm \lambda_i\trans \vm f_i(\vm x_i, \vm u_i,[\vm x_j\indqj]_{\neighs}) + \sum_{\neighs} \vm g_{ji}(\vm x_i\indqi, \vm x_j\indqj, \vm \lambda_j\indqj)\trans (\vm x_i - \vm x_i\indq) $ and terminal condition $\vm { \bar \lambda}_{i}(T)= \nablaxi L_i( \vm {\bar x}_i(T))$. Let $\vm p_i\indqi(\tau; \vm \xi_i) := [\vm {\bar x}_i\indqi(\tau; \vm \xi_i)\trans,\, \vm {\bar \lambda}_i\indqi(\tau;\vm \xi_i)\trans]\trans$, $\timetau$ denote the iterate of agent $\agents$, then the asynchronous execution of SBDP from a local viewpoint is given in Algorithm~\ref{alg:SBDP_cont_local_async} \cite{Pierer}. Each iteration consists of solving OCP \eqref{eq:local_OCP_alg_async} followed by a communication step. Unlike the synchronous execution, agents do not wait for updated information from all neighbors and directly proceed with the currently available trajectories. This is reflected in the use of local iteration counters $q_i$, whereby agents may rely on outdated neighbor information indexed by $q_j$. Consequently, in the asynchronous case one generally has $q_i \neq q_j$, whereas in a synchronous execution the counters are enforced to satisfy $q_i = q_j$. 

In the context of a real-time DMPC scheme, Algorithm~\ref{alg:SBDP_cont_local_async} is used to solve the centralized OCP \eqref{eq:central_OCP} in an asynchronous fashion and is terminated after a fixed number of local iterations $\bar q_i$ have been executed.
The resulting approximate solution $\vm {\hat u}(\tau; \vm x_k) :=[{\vm { \bar u}}_i^{\bar q_i}(\tau; \vm x_{i,k})]_{\agents}$, $\timetau$, is used to define the DMPC control law
\begin{align} \label{eq:DMPC_control_law}
   {\vm u}(t_k+\tau)  = \vm {\hat u}(\tau; \vm x_k)\,, \quad \timetausample
\end{align}
which is computed locally by each agent. The centralized closed loop evolution under \eqref{eq:DMPC_control_law} is given as
$
\vm x_{k+1} = \vm \Psi(
\Delta t; \vm x_k,\vm {\hat u}(\cdot;\vm x_k))$.  Furthermore, Algorithm \ref{alg:SBDP_cont_local_async} is warm-started with the previous solution, i.e., 
$\vm p_i^0(\tau; \vm x_{i,k+1}) = \vm p_i^{\bar{q}_i}(\tau;\vm x_{i,k})$, $ \timetau$ which additionally can be shifted by $\Delta t$ with a suitable tail approximation if desired.
\begin{algorithm}[tb] 
		\caption{a-SBDP for solving OCP \eqref{eq:central_OCP} \cite{Pierer}}
		\begin{algorithmic}[1]
        \addtocounter{ALG@line}{-1}
			\State Initialize and send $\vm p_i^0(\tau; \vm \xi_i)$, $\timetau$ to all neighbors $\neighs$; Set $\bar q_i$, initial condition $\vm \xi_i$, and $q_i \leftarrow 0$
			\State Compute $(\vm {u}_i\indqin(\cdot), \vm p_i\indqin(\cdot))$ by solving \eqref{eq:local_OCP_alg_async}.
\State Broadcast $\vm p_i\indqin(\cdot)$ to all neighbors $\neighs$.
\State Receive $\vm p_j\indqjn(\cdot)$ from a subset of neighbors.
\State Stop if $q_i= \bar q_{i} - 1$ . Otherwise, go to line~$1$ with $q_i \leftarrow q_i+1$.
\end{algorithmic}\label{alg:SBDP_cont_local_async}
\end{algorithm}
\subsection{Characterization of the asynchronous process}
While Algorithm~\ref{alg:SBDP_cont_local_async} is directly suitable for implementation, a global description of the asynchronous execution is required for its convergence analysis. To this end, we introduce an abstract local solution operator associated with each agent.
For each agent $\agents$, we interpret \eqref{eq:local_OCP_alg_async} as a parametric OCP, where the parameters are given by the external trajectories $(\vm p_j(\cdot))_{\neighsi}$. Under the assumption that this OCP admits a locally unique solution, we define the solution operator $\vm \Phi_i: \Pi_{\neighsi} C([0,T]; \mathbb{R}^{p_j}) \rightarrow C([0;T],\mathbb{R}^{p_i})$ which maps the external trajectories to the corresponding local primal-dual solution trajectory of agent $i$, i.e.,
\begin{align}
    \vm \Phi_i((\vm p_j(\cdot))_{\neighsi}; \vm \xi_i) := \vm p_i(\cdot; (\vm p_j(\cdot))_{\neighsi}, \vm \xi_i)\,.
\end{align}
Equivalently, we obtain $(\vm \Phi_i((\vm p_j(\cdot))_{\neighsi}); \vm \xi_i)_{\agents} =\vm \Phi(\vm p(\cdot); \vm \xi) = (\vm {\bar x}(\cdot; \vm p(\cdot), \vm \xi), \vm {\bar \lambda}(\cdot;\vm p(\cdot), \vm \xi))$, $\vm p(\cdot) = (\vm p_i(\cdot))_{\agents}$ as the stacked solutions of the OCPs \eqref{eq:local_OCP_alg_async} and $\vm {\bar u}(\cdot;\vm p(\cdot), \vm \xi_i)$ as the stacked control. To describe the asynchronous process centrally, let $\mathcal{S}(q) \in \mathcal{V}$ denote the set of agents that perform an update during the global update event $q$, where $q=0,1,\dots$ indexes the sequence of update events across the network. Here, we consider the edge case where $\mathcal{S}(q) = \{a(q)\}$ is a singleton, i.e., each global update event corresponds to exactly one agent performing its local update, leading to
$ \bar q := \sum_{\agents} \bar q_i $ per-agent updates being performed globally within one MPC step.
Then, Algorithm~\ref{alg:SBDP_cont_local_async} is described by the recursion 
\begin{align} \nonumber
  \vm p_i\indqn(\cdot; \vm \xi_i) = \begin{cases} \vm \Phi_i(({\vm p}_{j}\delayindex(\cdot; \vm \xi_j);\vm \xi_i)_{j \in \mathcal{\bar N}_i})\,,\!&i = a(q)\\
\vm p_i\indq(\cdot; \vm \xi_i)\,, \!&i \neq a(q)
\end{cases} 
\end{align}
with $d_{ij}(q)\in\mathbb N_0$ denoting the delay, measured in global per-agent updates, of the trajectory $\vm p_j(\cdot; \vm \xi_j)$, $j\in\mathcal N_i$, used by agent $\agents$ at update~$q$. Consequently, agent $\agents$ performs its update using trajectories generated $d_{ij}(q)$ update events earlier. The delays therefore quantify the age of the information available to agent $\agents$ and are generally unknown a priori. Such delays may arise from heterogeneous solution times of the local OCPs or  communication latencies during trajectory exchange.
Furthermore, we assume that either the initial trajectories $\vm p_i^0(\cdot)$  are transmitted without delay or that each agent initializes $\vm p_j^0(\cdot)$ with the corresponding values stored by neighboring agents $j\in\mathcal V$.  To ensure that the asynchronous process remains well defined and that information does not become arbitrarily outdated, the degree of asynchronism is restricted by the following assumption.
\begin{assum} \label{ass:asynchronism}
    There exist finite integers $d \in \mathbb{N}_0$ and $s\in \mathbb{N}$ such that i) the delays are uniformly bounded, i.e., 
    \begin{align}
        0\leq d_{ij}(q)\leq d <\infty\,, \quad \forall\agents,\, \forall\neighs.
    \end{align}
    and ii) that every agent performs an update at least in every s consecutive iterations, i.e., 
    \begin{align}
       \{a(q)\} \cup \{a(q+1)\} \cup \dots \cup \{a(q+s-1)\} = \mathcal{V} \,.
    \end{align}
\end{assum}
This assumption is often referred to as partial asynchronism in literature which is in contrast to total asynchronism where delays may become unbounded \cite{Bertsekas}. The first condition ensures that information used by any agent is at most $d$ update events old, while the second ensures that no agent permanently drops out of the process, e.g., hardware failure. A synchronous execution is recovered by setting $\mathcal{S}(q) = \mathcal{V}$ for all $q$ and enforcing $d_{ij}(q) = 0$, i.e., all agents update simultaneously at each event and no information gets lost, leading to $s=1$ and $d=0$ in Assumption \ref{ass:asynchronism}. In this case, the event index $q$ coincides with a classical synchronous iteration index.
\subsection{Algorithmic analysis}
In this section, we analyze the convergence of asynchronous SBDP as a prerequisite for the subsequent stability analysis of the closed loop system under the control law~\eqref{eq:DMPC_control_law}. The analysis mainly follows~\cite{Pierer}, where convergence of asynchronous SBDP was first established. In contrast to~\cite{Pierer}, however, we avoid convexity assumptions as well as assumptions directly imposed on solvability of the local OCPs~\eqref{eq:local_OCP_alg_async}. Instead, the properties are derived from assumptions imposed solely on the centralized solutions.

To establish existence and local uniqueness of the solutions of OCP \eqref{eq:local_OCP_alg_async}, we employ a coercivity condition which mirrors the standard second-order sufficiency condition from nonlinear programming.
To formulate this condition, define $\vm A_i(\tau) = \nabla_{\vm x_i} \vm f_i\inds(\tau)$, $\vm B_i(\tau) = \nabla_{\vm u_i} \vm f_i\inds(\tau)$, and $\vm P_i(T) = \nabla_{\vm x_i \vm x_i}^2 L_i(\vm { \bar x}_i\inds(T))$, $\vm Q_i(\tau)=\nabla_{\vm x_i \vm x_i}^2 \bar H_i\inds(\tau)$, $\vm R_i(\tau)=\nabla_{\vm u_i \vm u}^2 \bar H_i\inds(\tau)$, $\vm S_i(\tau) =2 \nabla_{\vm x_i \vm u_i}^2 \bar H_i\inds(\tau)$, $\timetau$ where $\vm f_i\inds(\tau) := \vm f_i(\vm {\bar x}_i\inds(\tau; \vm \xi), \vm {\bar u}_i\inds(\tau; \vm \xi), \Ni{\bar x}\inds(\tau; \vm \xi))$ as well as $ \bar H_i\inds(\tau) :=  H_i\inds(\vm {\bar  x}_i\inds(\tau; \vm \xi), \vm { \bar u}_i\inds(\tau; \vm \xi), \vm { \bar \lambda}_i\inds(\tau; \vm \xi))$ denote the evaluation of dynamics \eqref{eq:central_OCP_dynamics} and Hamiltonian \eqref{eq:Hamiltonian} along the solution for each $\vm \xi \in \mathbb{X}_0$, cf.  \cite[Sec. 6]{Dontchev}.
\begin{assum} \label{ass:coercivity}
For any $\agents$, $\vm \xi \in \mathbb{X}_0$, and $T \in \mathbb{T}$ there exists a constant $\rho_i \in \mathbb{R}_{>0}$ such that 
\begin{multline}
   \|\delta \vm x_i(T)\|_{\vm P_i(T)}^2+  \int_0^T\! \bigg( \|\delta \vm x_i(\tau)\|^2_{\vm Q_i(\tau)}+ \|\delta \vm u_i(\tau)\|^2_{\vm R_i(\tau)} \\ +\delta \vm x_i(\tau)\trans \vm S_i(\tau)\delta \vm u_i(\tau)\, \dd \tau \bigg) \geq\rho_i \! \int_0^T \!\!\| \delta \vm u_i(\tau)\|^2 \, \dd \tau
\end{multline}
holds for all perturbations $\delta \vm u_i(\cdot) \in C([0,T];\mathbb{U}_i)$ where $\delta \vm x_i(\tau)$ is the solution of the linearized dynamics $ \delta \vm {\dot x}_i(\tau) = \vm A_i(\tau) \delta \vm x_i(\tau) + \vm B_i(\tau) \delta \vm u_i(\tau)$ with $ \delta \vm x_i(0)= \vm 0$, $\timetau$.
\end{assum}
The assumption neither implies convexity of the dynamics \eqref{eq:central_OCP_dynamics} nor of the costs \eqref{eq:agent_costs} and can be enforced algorithmically by adding regularization terms $ \frac{\gamma_i}{2}(\| \vm x_i - \vm x_i\indqi\|^2 + \| \vm u_i - \vm u_i\indqi\|^2)$ with sufficiency large penalty $\gamma_i \in \mathbb{R}_{\geq 0}$ to the integral costs in \eqref{eq:local_OCPalg_cost_async}. The next Lemma utilizes Assumption \ref{ass:coercivity} and applies a regularity result from parametric optimal control to the local OCPs, cf. \cite[Thm. 5]{Dontchev}.  
\begin{lem} \label{lem:solvability_regularity_local_ocps}
Let Assumption \ref{ass:coercivity} hold. Then, for every $\vm \xi \in \mathbb{X}_0$ and $T \in \mathbb{T}$ there exist constants $r_1^\prime$, $r_1$, $L \in \mathbb{R}_{>0}$ such that for all $\vm p(\cdot)\in \mathcal{B}_{r_1}(\vm p\inds(\cdot; \vm \xi))$ the local OCPs~\eqref{eq:local_OCP_alg_async} admit a locally unique minimizer $(\vm { \bar u}(\cdot;\vm p(\cdot)), \vm { \bar  x}(\cdot;\vm p(\cdot)) \in \mathcal{B}_{r_1^\prime}((\vm u\inds(\cdot;\vm \xi), \vm x\inds(\cdot;\vm \xi)))$. Moreover, the associated adjoint trajectory $\vm {\bar \lambda}(\cdot; \vm p(\cdot))$ is bounded and locally unique and the mapping $\vm \Phi(\cdot)$ is Lipschitz continuous in the sense of
\begin{multline} \label{eq:Lipschitz_local_solution}
    \|\vm{\bar u}(\cdot; \vm p(\cdot)) - \vm{\bar u}\inds(\cdot; \vm \xi)\|_{b,\infty} +  \|\vm{\bar x}(\cdot; \vm p(\cdot)) - \vm{\bar x}\inds(\cdot;\vm \xi)\|_{b,\infty} \\  \!+\!\!\|\vm{\bar \lambda}(\cdot; \vm p(\cdot))\! -\!\! \vm{\bar \lambda}\inds(\cdot;\vm \xi)\|_{b,\infty}\!\leq\! L\|\vm p(\cdot) \! - \! \vm p\inds(\cdot;\vm \xi)\|_{b,\infty}\,.
\end{multline}  
\end{lem}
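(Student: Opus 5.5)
The plan is to view the stacked local OCPs \eqref{eq:local_OCP_alg_async} as one parametric problem in the parameter $\vm p(\cdot)$ and then apply the strong-regularity result \cite[Thm. 5]{Dontchev}. Three ingredients are needed: a reference solution at the nominal parameter, a coercivity condition there, and smooth dependence of the problem data on the parameter.

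\textbf{Reference point.} First I will show that at $\vm p(\cdot)=\vm p\inds(\cdot;\vm \xi)$ the centralized minimizer $(\vm{\bar u}\inds,\vm{\bar x}\inds,\vm{\bar\lambda}\inds)$ restricted to agent $i$ satisfies the first-order optimality system of \eqref{eq:local_OCP_alg_async}. This step uses the neighbor-affine structure.
\begin{itemize}
\item With $\vm x_i\indqi=\vm{\bar x}_i\inds$, the sensitivity term $\sum_{\neighs}\vm g_{ji}(\cdot)\trans\delta\vm{\bar x}_i$ contributes exactly $\nabla_{\vm x_i}$ of the neighbors' costs and dynamics $l_{ji}+\vm\lambda_j\trans \vm f_{ji}$.
\item Hence the local adjoint equation \eqref{eq:Hamiltonian} coincides with the $i$-th block of the centralized adjoint equation.
\item The local minimum condition for $\vm u_i\in\mathbb U_i$ coincides with the centralized one, since $\mathbb U=\Pi_i\mathbb U_i$.
\end{itemize}
So $\vm p\inds$ is a fixed point of $\vm \Phi$: $\vm\Phi(\vm p\inds;\vm\xi)=\vm p\inds$.

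\textbf{Second-order condition.} Along this reference, the Hessians of the local Hamiltonian $H_i$ equal $\vm Q_i,\vm R_i,\vm S_i,\vm P_i$. The $\vm g_{ji}$ term is linear in $\vm x_i$ and so adds no curvature. The local linearized dynamics are $\vm A_i,\vm B_i$, because the neighbor states enter \eqref{eq:local_OCPalg_dyn_async} as fixed parameters. Therefore Assumption \ref{ass:coercivity} is exactly the coercivity hypothesis of \cite[Thm. 5]{Dontchev} for each local problem.

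\textbf{Parameter dependence.} Next I will check that the data of \eqref{eq:local_OCP_alg_async} depend smoothly on the parameter $\vm p(\cdot)$. The parameter enters three places: the dynamics through $\vm x_j\indqj$, the costs through $l_{ij}$, and the linear sensitivity term through $(\vm x_i\indqi,\vm x_j\indqj,\vm\lambda_j\indqj)$. By twice continuous differentiability of $\vm f$, $l$, $L$, these data and their first derivatives are Lipschitz in $\vm p(\cdot)$ in the $\|\cdot\|_{b,\infty}$ norm on a bounded neighborhood. They are also continuously Fréchet differentiable, uniformly in $\tau$, on a compact tube around the reference, since $\vm{\bar x}\inds$ and $\vm{\bar\lambda}\inds$ are bounded on $[0,T]$.

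\textbf{Applying the theorem.} Dontchev's theorem then gives, for each agent, radii $r_{1,i},r_{1,i}'$ and a constant $L_i$ with the following properties:
\begin{itemize}
\item for all parameters within $r_{1,i}$, the local OCP has a locally unique solution triple in a ball of radius $r_{1,i}'$;
\item the multiplier $\vm{\bar \lambda}_i$ is unique and bounded;
\item the solution triple is Lipschitz in the parameter, measured in the sup norm.
\end{itemize}
Agent $i$'s problem depends only on $(\vm p_j)_{j\in\bar{\mathcal N}_i}$, and $\|\cdot\|_b$ is the max over blocks. Taking $r_1=\min_i r_{1,i}$, $r_1'=\max_i r_{1,i}'$ and $L=3\max_i L_i$ yields \eqref{eq:Lipschitz_local_solution}; the factor $3$ absorbs summing the three block-max norms.

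\textbf{Main obstacle.} I expect the hardest step to be verifying that the functional-analytic hypotheses of \cite[Thm. 5]{Dontchev} match the present setting. That theorem is usually stated in $L^\infty$ with the control constraint expressed as a normal-cone inclusion, whereas here the controls are continuous and errors are measured in sup norm. I would first invoke the theorem in $L^\infty$. I would then use that the minimum principle with $\vm R_i$ positive definite (implied by coercivity) yields the control as a Lipschitz projection of continuous functions, so continuity and the sup-norm estimate are preserved.
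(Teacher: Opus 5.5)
Your proposal is correct and follows essentially the same route as the paper. It shows that the centralized primal--dual solution solves the local optimality systems at $\vm p = \vm p\inds(\cdot;\vm\xi)$, identifies Assumption~\ref{ass:coercivity} as the coercivity hypothesis there, invokes \cite[Thm.~5]{Dontchev}, and gets uniqueness and boundedness of the adjoint from the linear adjoint dynamics. Applying the theorem agent by agent and taking $\min/\max$ over radii and constants is equivalent to the paper's single application to the stacked problem, and your handling of the $L^\infty$-versus-$C$ issue (recovering continuity of the control from the pointwise minimum condition with positive definite $\vm R_i$) is more careful than the paper's embedding remark.
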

\begin{pf}
At first, we observe that for $\vm p(\cdot) = \vm p\inds(\cdot;\vm \xi)$ we get that $\vm{\bar x}(\tau; \vm p\inds(\cdot; \vm \xi), \vm \xi) = \vm{\bar x}\inds(\tau;\vm \xi) $,  $\vm{\bar u}(\tau; \vm p\inds(\cdot;\vm \xi), \vm \xi) = \vm u\inds(\tau; \vm \xi) $ and  $\vm{\bar \lambda}(\tau; \vm p\inds(\cdot; \vm \xi), \vm \xi) = \vm{\bar \lambda} \inds(\tau; \vm \xi) $ are solutions of the concatenated local OCPs \eqref{eq:local_OCP_alg_async} since their optimality conditions are structurally identical to OCP \eqref{eq:ocp}. 
The result then follows by a direct application of \cite[Thm. 5]{Dontchev}. Although the theorem is stated for the functional setting $\vm {\bar u}(\cdot) \in L^\infty$ and $\vm {\bar x}(\cdot), \vm { \bar \lambda}(\cdot) \in W^{1,\theta}$, $\theta \in [1,\infty]$, it also applies in the present setting since, on the bounded interval $[0,T]$, the continuous  embeddings $C\subset L_\infty$ and $C \subset W^{1,\theta}$ hold for all $\theta \in [1,\infty]$. Moreover, all assumptions of \cite[Thm. 5]{Dontchev} are satisfied: i) the constraint set $\mathbb{U}$ is nonempty, closed and convex; ii) all involved functions in the local OCPs \eqref{eq:local_OCP_alg_async} are twice continuously differentiable; iii) the coercivity Assumption \ref{ass:coercivity} holds; and iv) we assume existence and local uniqueness of the optimal solutions. Finally, 
the uniqueness and boundedness of $\vm {\bar \lambda}(\cdot;\vm p(\cdot))$ follows from the uniqueness of $(\vm {\bar u}(\cdot, \vm {\bar p}(\cdot)),\vm {\bar x}(\cdot; \vm p(\cdot))$ and the adjoint dynamics whose right-hand-side is linear in $\vm \lambda$ and continuous w.r.t.\ $\tau$ for all $\timetau$ such that standard arguments imply the existence, uniqueness and boundedness of $\vm {\bar \lambda}(\tau;\vm p(\cdot))$, $\timetau$ \cite[Sec. 2.4.2]{Vidyasagar}. \qed
\end{pf}
Lemma \ref{lem:solvability_regularity_local_ocps} plays a central role for two reasons. First, it guarantees well-posedness of Algorithm \ref{alg:SBDP_cont_local_async}, ensuring existence and local uniqueness of solutions to the local OCPs~\eqref{eq:local_OCP_alg_async}. Second, it establishes Lipschitz continuity of these solutions w.r.t.\ the external trajectories. Based on Lemma \ref{lem:solvability_regularity_local_ocps}, the following convergence result can be shown.   
\begin{thm} \label{thm:conv_SBDP}
Under Assumptions \ref{ass:asynchronism} and \ref{ass:coercivity}, there exists a horizon length $0<\bar T$  for every $\vm \xi \in \mathbb{X}_0$ and $\vm p^0(\cdot) \in \mathcal{B}_{r_1} (\vm p\inds(\cdot; \vm \xi))$ such that for any $T<\bar T$ there exists a constant $0<C<1$ such that the error $ e\indq(\vm \xi) := \|\vm p\indq(\cdot;\vm \xi) - \vm p\inds(\cdot;\vm \xi)\|_{b,\infty}$ of Algorithm~\ref{alg:SBDP_cont_local_async} converges R-linearly, i.e.,  
    \begin{align} \label{eq:R_linear_convergence}
       e\indq(\vm \xi) \leq \contr e^0(\vm \xi)\,, \quad q=0,1,\dots
    \end{align}
   holds and the iterates satisfy $ \vm p\indq(\cdot; \vm \xi) \in \mathcal{B}_{r_1} (\vm p\inds(\cdot; \vm \xi))$.
\end{thm}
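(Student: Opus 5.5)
Our strategy is to show that the synchronous SBDP map $\vm \Phi(\cdot;\vm \xi)$ is a block-maximum-norm contraction about the fixed point $\vm p\inds(\cdot;\vm \xi)$ when the horizon is short. Partial asynchronism (Assumption~\ref{ass:asynchronism}) then yields R-linear convergence through the classical Bertsekas--Tsitsiklis ``box'' argument.

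\textbf{Step 1: fixed point and local contraction.} By the first part of the proof of Lemma~\ref{lem:solvability_regularity_local_ocps}, $\vm p\inds(\cdot;\vm \xi)=\vm \Phi(\vm p\inds(\cdot;\vm \xi);\vm \xi)$. Next we sharpen the Lipschitz estimate \eqref{eq:Lipschitz_local_solution} blockwise: for each agent $i$,
\begin{align*}
\|\vm \Phi_i((\vm p_j)_{\neighsi};\vm \xi_i)-\vm p_i\inds\|_{\infty}\le C\,\max_{\neighsi}\|\vm p_j-\vm p_j\inds\|_{\infty},
\end{align*}
with $C=C(T)<1$ for $T<\bar T$. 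To obtain the $T$-dependence, we write the local optimality conditions (dynamics, adjoint \eqref{eq:Hamiltonian}, and the variational inequality for $\vm u_i$) as a perturbation of the centralized ones. The external trajectories enter only through the coupling terms $\vm f_{ij}$, $l_{ij}$, and $\vm g_{ji}$. Because of the neighbor-affine structure and the linearization in $\delta \vm{\bar x}_i$, the mismatch with the centralized first-order conditions is of first order in $\vm p_j-\vm p_j\inds$ and is integrated over $[0,T]$.

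Gronwall estimates for the state equation (forward) and the adjoint equation (backward) give bounds of the form $\|\delta \vm x_i\|_\infty\le c_1 T e^{c_2T}(\cdots)$ and similarly for $\delta\vm\lambda_i$. The control is recovered from $\delta\vm\lambda_i,\delta\vm x_i$ via the strong monotonicity of the control variational inequality implied by Assumption~\ref{ass:coercivity}. Altogether this gives $C(T)=\mathcal O(T)\to 0$, so a $\bar T$ with $C<1$ exists. One also needs the radius $r_1$ to be uniform; we shrink it if necessary so that the ball $\mathcal{B}_{r_1}$ is mapped into itself.

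\textbf{Step 2: invariance.} Since $C<1$, any update that uses delayed iterates lying in $\mathcal B_{r_1}(\vm p\inds)$ again produces an iterate in that ball. We prove by induction on $q$ that every iterate, including the delayed copies (valid because of the assumption on the initial transmission), stays in $\mathcal B_{r_1}$ with error at most $e^0$.

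\textbf{Step 3: asynchronous R-linear rate.} We prove by induction on $k$ that for all $q\ge k(s+d)$ every block satisfies $\|\vm p_i^q-\vm p_i\inds\|_\infty\le C^k e^0$. Suppose this holds for $k$. From index $k(s+d)+d$ on, every delayed value $\vm p_j^{q-d_{ij}(q)}$ is dated after $k(s+d)$, so it satisfies the level-$k$ bound. Within the next $s$ events every agent updates, by Assumption~\ref{ass:asynchronism}~ii), and each update contracts its block to $C^{k+1}e^0$. Later updates preserve that bound, since they also use information that is at level $k$ or better, and blocks of agents that do not update remain unchanged. Hence the claim holds for $q\ge (k+1)(s+d)$, which is precisely \eqref{eq:R_linear_convergence} with exponent $\floor{q/(s+d)}$.

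The main obstacle is Step 1: extracting the factor $C(T)<1$ uniformly in the sup-norm from the perturbed optimality system while respecting the control constraints. For this we first try the Dontchev-type estimate applied to the difference system, treating the coupling terms as parameters and tracking their $T$-scaled influence.
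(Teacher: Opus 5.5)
Your proposal is correct and has the same structure as the paper's proof: a blockwise max-norm contraction $e_i^{q+1}\le C\max_{j\in\mathcal{\bar N}_i} e_j^{q-d_{ij}(q)}$ with $C<1$ for short horizons, invariance of $\mathcal{B}_{r_1}(\vm p\inds(\cdot;\vm \xi))$ by induction, and the partial-asynchronism argument that yields the exponent $\lfloor q/(s+d)\rfloor$. The only difference is that the paper imports the contraction from \cite[Lem.~2]{Pierer2} and the R-linear rate from \cite[Thm.~1]{Pierer}, whereas you re-derive both, with your Step~1 sketch (short-horizon Gronwall bounds plus the neighbor-affine structure) standing in for the cited lemma.
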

\begin{pf}
The proof follows from a direct application of \cite[Thm. 1]{Pierer}. We verify that all requirements are satisfied: i) Assumption \ref{ass:asynchronism} coincides with \cite[Ass. 1]{Pierer}; ii) the assumption of the existence of a locally unique centralized solution implies \cite[Ass. 2]{Pierer} where the uniqueness of $(\vm {\bar u}\inds(\cdot; \thisstate),\vm {\bar x}\inds(\cdot;\thisstate))$ together with the adjoint dynamics implies the uniqueness and boundedness of $\vm {\bar  \lambda}\inds(\cdot;\thisstate)$ as in Lemma \ref{lem:solvability_regularity_local_ocps}; iii) Let $e_i^q(\vm \xi):= \| \vm p_i\indq (\cdot; \vm \xi) -\vm p_i\inds (\cdot; \vm \xi)\|_{\infty}$, then \cite[Thm. 1]{Pierer} requires the estimate 
\begin{equation}\label{eq:async_contraction}
e_i\indqn(\vm \xi)\begin{cases}\leq C \max\limits_{j \in \mathcal{\bar N}_i}e_j\delayindex(\vm \xi)\,, &i =a(q)\\
= e_i\indq(\vm \xi)\,, &i \neq a(q)
\end{cases} 
\end{equation}
to hold which results by applying \cite[Lem. 2]{Pierer2} for $i = a(q)$ and realizing that for $i \neq a(q)$ the error remains unchanged. 
The assumptions of \cite[Lem. 2]{Pierer2} are met since i) Lemma \ref{lem:solvability_regularity_local_ocps} directly implies \cite[Ass. 3]{Pierer2}, ii) optimal and iterated trajectories are bounded on compact sets which is ensured by the fact that $(\vm { \bar u}(\cdot;\vm p(\cdot)), \vm { \bar x}(\cdot;\vm p(\cdot)),\vm { \bar \lambda}(\cdot;\vm p(\cdot)) ) $ are confined to compact sets according to Lemma \ref{lem:solvability_regularity_local_ocps}. Finally, the positive invariance of $\mathcal{B}_{r_1}(\vm p\inds(\cdot; \vm \xi))$ follows from the contraction estimate \eqref{eq:async_contraction} and Assumption~\ref{ass:asynchronism} which ensures uniformly bounded delays $d_{ij}\leq d$. To this end, taking the maximum over \eqref{eq:async_contraction} w.r.t.\ the delay interval yields $e^{q+1}(\vm \xi)\leq \max_{\ell\in\{q-d,\dots,q\}} C e^\ell(\vm \xi)$.
Because $C<1$ for all $T<\bar T$ and $e^0(\vm \xi)\leq r_1$, induction implies
$
e^q(\vm \xi)\leq e^0(\vm \xi)\leq r_1,\, \forall q\geq 0,
$
and therefore $\vm p^q(\cdot; \vm \xi)\in\mathcal B_{r_1}(\vm p\inds(\cdot; \vm \xi))$ for all iterations. Consequently, all iterates remain inside the region where Lemma~\ref{lem:solvability_regularity_local_ocps} and the contraction estimate remain valid. \qed
\end{pf}
Theorem \ref{thm:conv_SBDP} establishes R-linear convergence of a-SBDP. This is in contrast to the synchronous case, where Q-linear convergence can be guaranteed under the same assumptions \cite{Pierer2} and requires a careful consideration in the following stability analysis.  

%% file: content/stability_analysis.tex
\section{Stability analysis}
\label{sec:stability_analysis}
This section is concerned with establishing conditions under which the closed loop system resulting from applying the asynchronous DMPC control law \eqref{eq:DMPC_control_law} obtained after a limited number of global per-agent iterations $\bar q = \sum_{\agents} \bar q_i$ is stable. 
To this end, we make the following assumption concerning the value function $V(\cdot)$ associated with OCP \eqref{eq:ocp}. This assumption ensures that $V(\cdot)$ serves as a Lyapunov function for the closed loop system under the optimal MPC control law \eqref{eq:MPC_control_law} and fulfills certain regularity conditions.  
\begin{assum} \label{ass:nominal_MPC_stability}
Assume that there exist constants $\alpha$, $a_1$, $a_2$, $a_3 \in \mathbb{R}_{>0}$ with $a_3<1$ such that for any $\vm \xi \in  \mathbb{X}_{\alpha} := \{\vm \xi \in \mathbb{X}_0 \, |\, V(\vm \xi) \leq \alpha\}$ the following holds
\begin{subequations} \label{eq:nominal_Lyapunov_decrease}
    \begin{align}
    a_1 \| \vm \xi\|_b^2 &\leq V(\vm \xi) \leq  a_2 \|\vm \xi\|_b^2 \label{eq:Lyapunov_1} \\
   V(\vm \Psi(\Delta t; \vm \xi, \vm { \bar u}\inds(\cdot;\vm \xi)))  &\leq  V(\vm \xi) - a_3 \|\vm \xi\|_b^2\,.\label{eq:Lyapunov_2}
\end{align}
\end{subequations}
In addition, it is assumed that $V(\cdot)$ is Lipschitz continuous on $ \mathbb{X}_0$ and twice continuously differentiable at $\vm \xi = \vm 0$ with $V(\vm \xi)> 0$ for $\vm \xi \in \mathbb{X}_0\setminus \mathbb{X}_\alpha$.
\end{assum}
This assumption is formulated independently of a specific centralized OCP design to retain flexibility in the choice of the MPC formulation. In the context of DMPC, it can, for example,  be enforced through suitable separable terminal costs \cite{Pierer2} or by invoking a cost controllability condition in combination with a sufficiently long prediction horizon \cite{Reble}. The (exponential) asymptotic stability of the origin with region of attraction $\mathbb{X}_{\alpha}$ under the optimal control law \eqref{eq:MPC_control_law} then follows from standard MPC-arguments. 
In similar spirit, the next assumption requires Lipschitz-like estimates of the change of optimal solutions of OCP \eqref{eq:central_OCP} w.r.t.\ perturbations of the initial state $\vm \xi \in \mathbb{X}_{\alpha}$ combined with a regularity assumption on the solution map $\vm \Psi(\cdot)$ of \eqref{eq:dynamics_ocp}. 
\begin{assum} \label{ass:Lipschitz_solutions}
There exist constants $L_p, L_u \in \mathbb{R}_{>0}$ such that for any $\vm \xi \in \mathbb{X}_{\alpha}$ and  $\vm \xi^\prime \in \mathbb{X}_0$, the following holds
\begin{subequations}
    \begin{align}
    \|\vm p\inds(\cdot; \vm \xi^\prime) - \vm p\inds(\cdot; \vm \xi)\|_{b,\infty} &\leq L_p \| \vm \xi^\prime - \vm \xi\|_b \label{eq:Lipschitz_primal_dual_solution}\\
    \|\vm u\inds(\cdot; \vm \xi^\prime) - \vm u\inds(\cdot; \vm \xi)\|_{b,\infty} &\leq L_{u} \| \vm \xi^\prime - \vm \xi\|_b\,.
\end{align}
\end{subequations}
Moreover, we assume $\vm \Psi(\tau; \vm \xi, \vm u(\cdot)) \in \mathbb{X}_0$ for all $\vm \xi \in \mathbb{X}_\alpha$,  $\vm u(\tau) \in \mathbb{U}$, and $\timetausample$.  
\end{assum}
The next Lemma provides an estimate between the system states resulting from applying the MPC and DMPC control laws \eqref{eq:MPC_control_law} and \eqref{eq:DMPC_control_law}, respectively, as well as a bound on the rate at which a state can change under~\eqref{eq:DMPC_control_law}.
\begin{lem} \label{lem:diff_states}
Let Assumptions \ref{ass:coercivity} and \ref{ass:Lipschitz_solutions} hold. Then, for $\vm \xi \in \mathbb{X}_\alpha$ and $\vm p(\cdot) \in \mathcal{B}_{r_1}(\vm p\inds(\cdot; \vm \xi))$, there exist constants $b_1^\prime, b_2^\prime \in \mathbb{R}_{>0}$ such that the following estimates hold
\begin{subequations}
    \begin{align} 
    \| \vm \xi_+ - \vm \xi_+\inds \|_b &\leq b_1^\prime \| \vm p(\cdot) - \vm p\inds(\cdot; \vm \xi)\|_{b,\infty} \label{eq:bound_x_DMPC_x_MPC} \\
   \| \vm \xi_+ - \vm \xi\|_b &\leq b_2^\prime \| \vm \xi\|_b + b_1^\prime \| \vm p(\cdot) - \vm p\inds(\cdot; \vm \xi)\|_{b,\infty} \label{eq:diff_states}  
\end{align}
\end{subequations}
with $\vm \xi_+ = \vm \Psi(\Delta t; \vm \xi, \vm {\hat u}(\cdot; \vm \xi))$ and $ \vm \xi_+\inds =  \vm \Psi(\Delta t; \vm \xi, \vm u\inds (\cdot;\vm \xi))$.
\end{lem}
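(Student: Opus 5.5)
The plan is to compare the two trajectories generated from the same initial state $\vm \xi$ under the inputs $\vm {\hat u}(\cdot;\vm \xi)$ and $\vm u\inds(\cdot;\vm \xi)$ on $[0,\Delta t]$, using a Gronwall-type estimate, and then to bound the input mismatch via the Lipschitz estimate \eqref{eq:Lipschitz_local_solution} of Lemma~\ref{lem:solvability_regularity_local_ocps}. The second estimate \eqref{eq:diff_states} will follow from the first by the triangle inequality together with a bound on $\|\vm \xi_+\inds-\vm \xi\|_b$ that is linear in $\|\vm \xi\|_b$.

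\textbf{Step 1 (uniform Lipschitz constant of $\vm f$).} By Assumption~\ref{ass:Lipschitz_solutions}, both trajectories $\vm \Psi(\tau;\vm \xi,\vm {\hat u}(\cdot;\vm \xi))$ and $\vm \Psi(\tau;\vm \xi,\vm u\inds(\cdot;\vm \xi))$ remain in the compact set $\mathbb X_0$ for $\tau\in[0,\Delta t]$, and the inputs take values in the compact set $\mathbb U$. Since $\vm f$ is $C^2$, it is therefore Lipschitz on $\mathbb X_0\times\mathbb U$, with constants $L_x,L_{u,f}$ in $\vm x$ and $\vm u$, and these constants are independent of $\vm \xi$. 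Here I use the equivalence of $\|\cdot\|_b$ and $\|\cdot\|_2$ on $\mathbb R^n$ and absorb the equivalence factors into the constants.

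\textbf{Step 2 (proof of \eqref{eq:bound_x_DMPC_x_MPC}).} Integrating the difference of the two dynamics gives
$\|\vm x(\tau)-\vm x\inds(\tau)\|\le \int_0^\tau L_x\|\vm x-\vm x\inds\|+L_{u,f}\|\vm {\hat u}-\vm u\inds\|\,\dd s$. Gronwall's inequality then yields $\|\vm \xi_+-\vm \xi_+\inds\|_b\le L_{u,f}\Delta t\, e^{L_x\Delta t}\|\vm {\hat u}(\cdot;\vm \xi)-\vm u\inds(\cdot;\vm \xi)\|_{b,\infty}$.

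The main point is to identify $\vm {\hat u}$ with the control output $\vm {\bar u}(\cdot;\vm p(\cdot))$ of the solution operator $\vm \Phi$. Each agent's final control is the local OCP solution computed from its last available input trajectories, and by Theorem~\ref{thm:conv_SBDP} these iterates stay in $\mathcal B_{r_1}(\vm p\inds)$. I will therefore read $\vm p(\cdot)$ as the (possibly delayed) collection of input trajectories from which the final controls are generated; the lemma is stated in exactly this abstract form. Applying \eqref{eq:Lipschitz_local_solution} then gives $\|\vm {\hat u}-\vm u\inds\|_{b,\infty}\le L\|\vm p(\cdot)-\vm p\inds(\cdot;\vm \xi)\|_{b,\infty}$, so we may set $b_1^\prime:=L_{u,f}L\Delta t\,e^{L_x\Delta t}$. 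The uniformity of $L$ over $\vm \xi\in\mathbb X_\alpha$ is the delicate part, since Lemma~\ref{lem:solvability_regularity_local_ocps} is stated pointwise in $\vm \xi$. I would handle this with a compactness argument over $\mathbb X_\alpha$, or else accept constants that depend on $\vm \xi$ as the statement permits.

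\textbf{Step 3 (proof of \eqref{eq:diff_states}).} Write $\|\vm \xi_+-\vm \xi\|_b\le\|\vm \xi_+-\vm \xi_+\inds\|_b+\|\vm \xi_+\inds-\vm \xi\|_b$. For the second term, since $\vm f(\vm 0,\vm 0)=\vm 0$ and $\vm u\inds(\cdot;\vm 0)=\vm 0$ (because $V(\vm 0)=0$), Assumption~\ref{ass:Lipschitz_solutions} gives $\|\vm u\inds(\tau;\vm \xi)\|\le L_u\|\vm \xi\|_b$. Hence $\|\vm f(\vm x\inds,\vm u\inds)\|\le L_x\|\vm x\inds\|+L_{u,f}L_u\|\vm \xi\|_b$. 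Gronwall then bounds $\|\vm x\inds(\tau)\|\le c\|\vm \xi\|_b$ on $[0,\Delta t]$, and consequently $\|\vm \xi_+\inds-\vm \xi\|_b\le\int_0^{\Delta t}\|\vm f\|\,\dd s\le b_2^\prime\|\vm \xi\|_b$. Combining this with Step 2 gives \eqref{eq:diff_states}.
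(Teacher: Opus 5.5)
Your proof is correct. For \eqref{eq:bound_x_DMPC_x_MPC} it matches the paper's: Gronwall on the difference of the two trajectories, then \eqref{eq:Lipschitz_local_solution} with $\vm{\hat u}$ read as the control output of $\vm\Phi$ at $\vm p(\cdot)$ (the paper writes $\vm{\hat u}(\cdot;\vm p(\cdot),\vm\xi)$ for this reason). This gives $b_1^\prime=\Delta t\,L_{f,u}L\,\mathrm{e}^{L_{f,x}\Delta t}$.

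For \eqref{eq:diff_states} your decomposition differs. The paper never passes through $\vm\xi_+\inds$. It applies Gronwall directly to $\vm r(\tau)=\vm\Psi(\tau;\vm\xi,\vm{\hat u})-\vm\xi$ along the DMPC trajectory, obtaining $\|\vm\xi_+-\vm\xi\|_b\le\Delta t\,\mathrm{e}^{L_{f,x}\Delta t}(L_{f,x}\|\vm\xi\|_b+L_{f,u}\|\vm{\hat u}\|_{b,\infty})$. It then splits the input, $\|\vm{\hat u}\|_{b,\infty}\le\|\vm{\hat u}-\vm u\inds(\cdot;\vm\xi)\|_{b,\infty}+\|\vm u\inds(\cdot;\vm\xi)-\vm u\inds(\cdot;\vm 0)\|_{b,\infty}\le L\|\vm p(\cdot)-\vm p\inds(\cdot;\vm\xi)\|_{b,\infty}+L_u\|\vm\xi\|_b$. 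This yields $b_2^\prime=\Delta t(L_{f,x}+L_{f,u}L_u)\mathrm{e}^{L_{f,x}\Delta t}$, and the error coefficient $\Delta t\,\mathrm{e}^{L_{f,x}\Delta t}L_{f,u}L$ again equals $b_1^\prime$.

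You split at the state level instead. You reuse \eqref{eq:bound_x_DMPC_x_MPC} and estimate only the nominal displacement $\|\vm\xi_+\inds-\vm\xi\|_b$ along the optimal trajectory. Your route makes the reappearance of the same $b_1^\prime$ automatic, and it cleanly separates a property of the optimal MPC feedback from the optimization error. The paper's route is a single Gronwall pass that never needs the optimal state trajectory for the second estimate. Both use the same ingredients ($\vm f(\vm 0,\vm 0)=\vm 0$, $\vm u\inds(\cdot;\vm 0)=\vm 0$, $L_u$, $L$) and give constants of the same form.

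The uniformity of $L$ over $\mathbb X_\alpha$ that you flag is also glossed over in the paper, which treats the constants of Lemma~\ref{lem:solvability_regularity_local_ocps} as independent of $\vm\xi$. So this is not a point where you fall short of the paper's argument.
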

\begin{pf}
To prove \eqref{eq:bound_x_DMPC_x_MPC}, consider the difference of the respective state trajectories generated by \eqref{eq:dynamics_ocp} for $\vm u\inds(\cdot; \vm \xi) = \vm u(\cdot; \vm p\inds(\cdot), \vm \xi)$ and $\vm {\hat u}(\cdot; \vm p(\cdot),\vm \xi)$. Then, we have
    \begin{align}
        & \| \vm \xi_+ \!\!- \!\vm \xi_+\inds \|_b \!=\!\! \| \vm \Psi(\Delta t; \vm \xi, \!\vm {\hat u}(\cdot; \vm p(\cdot), \vm \xi)) \!-\! \vm \Psi(\Delta t; \vm \xi, \vm u\inds (\cdot;\vm \xi))\|_b \nonumber \\
        &\leq  \int_0^{\Delta t} \|\vm f(\vm x(\tau), \vm {\hat u}(\cdot; \vm p(\cdot), \vm \xi)) - \vm f(\vm x\inds(\tau), \vm u\inds(\tau;\vm \xi))\|_b\, \dd \tau \nonumber \\
        &\leq \Delta t L_{f,u} L \mathrm{e}^{L_{f,x} \Delta t} \|\vm p(\cdot) - \vm p\inds(\cdot; \vm \xi)\|_{b,\infty}
    \end{align}
where $L_{f,x},L_{f,u},L\in \mathbb{R}_{>0}$ follow from the differentiability of
$\vm f$ on $\mathbb X_0$ and Lemma
\ref{lem:solvability_regularity_local_ocps}. The last step uses
Gronwall's lemma, proving \eqref{eq:bound_x_DMPC_x_MPC} with $
b_1'=\Delta t\,L_{f,u}L\,e^{L_{f,x}\Delta t}$. For \eqref{eq:diff_states}, consider the centralized dynamics \eqref{eq:dynamics_ocp} under the DMPC control law \eqref{eq:DMPC_control_law}, i.e.,
    \begin{align} \label{eq:bound_xi2}
        & \| \vm \xi_+ - \vm \xi\|_b = \|\vm \Psi(\Delta t; \vm \xi,\vm {\hat u}(\cdot; \vm p(\cdot), \vm \xi)) - \vm \xi \|_b =: \| \vm r(\Delta t)\|_b \nonumber\\
        &\leq \int_{0}^{\Delta t } \| \vm f( \vm x(\tau) + \vm r(\tau), \vm {\hat u}(\cdot; \vm p(\cdot), \vm \xi))\|_b\, \dd \tau \nonumber \\
        &\leq  \Delta t \mathrm{e}^{L_{f,x} \Delta t}  ( L_{f,x}\| \vm \xi \|_b + L_{f,u}\| \vm {\hat u}(\cdot; \vm p(\cdot), \vm \xi)\|_{b,\infty})\,,
    \end{align}
where the last inequality again follows from Gronwall's lemma. Moreover, we have
\begin{multline} \label{eq:bound_u}
\| \vm {\hat u}(\cdot; \vm p(\cdot), \vm \xi) - \vm u\inds(\cdot; \vm \xi) + \vm u\inds(\cdot; \vm \xi) - \vm u\inds(\cdot; \vm 0)\|_{b,\infty} \\
    \leq L\| \vm p(\cdot) -  \vm p\inds(\cdot; \vm \xi)\|_{b,\infty} + L_{u}\|\vm \xi\|_b\,,
\end{multline}
using $\vm u\inds(\cdot; \vm 0) = \vm 0$.
Substituting \eqref{eq:bound_u} into \eqref{eq:bound_xi2} yields \eqref{eq:diff_states} with $b_2^\prime = \Delta t (L_{f,x}+L_{f,u} L_u) \mathrm{e}^{L_{f,x} \Delta t} $. \qed
\end{pf}
Lemma \ref{lem:diff_states} and Assumption \ref{ass:nominal_MPC_stability} allow us to quantify the effect of the error $\| \vm p(\cdot) - \vm p\inds (\cdot, \vm \xi)\|_{b,\infty}$ on the nominal Lyapunov decrease condition \eqref{eq:Lyapunov_2}.  
\begin{lem} \label{lem:perturbed_Lyapunov}
Let Assumptions \ref{ass:coercivity}, \ref{ass:nominal_MPC_stability}, and \ref{ass:Lipschitz_solutions} hold and let $a:= \frac{a_3}{a_2}$. Then, there exists a constant $b_1 \in \mathbb{R}_{>0}$ such that for any $\vm \xi \in \mathbb{X}_{\alpha}$ and $\vm p\indq(\cdot;\vm \xi) \in \mathcal{B}_{r_1}(\vm p\inds(\cdot; \vm \xi))$, we have 
    \begin{align}\label{eq:perturbed_Lyapunov}
        V(\vm \xi_+) &\leq (1-a) V(\vm \xi) +  b_1 e\indq(\vm \xi)\,.
    \end{align}
\end{lem}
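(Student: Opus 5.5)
The plan is to split $V(\vm \xi_+)$ into the nominal successor value plus a perturbation term, and then bound each piece with Assumption~\ref{ass:nominal_MPC_stability} and Lemma~\ref{lem:diff_states}. Throughout, $\vm \xi_+ = \vm \Psi(\Delta t; \vm \xi, \vm{\hat u}(\cdot;\vm \xi))$, where $\vm{\hat u}$ is generated from $\vm p\indq(\cdot;\vm \xi)$, and $\vm \xi_+\inds = \vm \Psi(\Delta t;\vm \xi,\vm u\inds(\cdot;\vm \xi))$. We write
\begin{align*}
V(\vm \xi_+) = V(\vm \xi_+\inds) + \big(V(\vm \xi_+) - V(\vm \xi_+\inds)\big).
\end{align*}

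First we bound the nominal term. Combining \eqref{eq:Lyapunov_2} with the upper bound in \eqref{eq:Lyapunov_1}, which gives $\|\vm \xi\|_b^2 \geq V(\vm \xi)/a_2$, yields
\begin{align*}
V(\vm \xi_+\inds) \leq V(\vm \xi) - a_3\|\vm \xi\|_b^2 \leq \Big(1-\tfrac{a_3}{a_2}\Big)V(\vm \xi) = (1-a)V(\vm \xi).
\end{align*}
We also note $a\leq 1$, because $V(\vm \xi_+\inds)\geq 0$.

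Next we bound the perturbation term using the Lipschitz continuity of $V$ on $\mathbb{X}_0$ (Assumption~\ref{ass:nominal_MPC_stability}, constant $L_V$). For this we need both $\vm \xi_+$ and $\vm \xi_+\inds$ to lie in $\mathbb{X}_0$. This follows from the last part of Assumption~\ref{ass:Lipschitz_solutions}: $\vm \xi\in\mathbb{X}_\alpha$, both $\vm{\hat u}$ and $\vm u\inds$ take values in $\mathbb{U}$ since the local OCPs enforce $\mathbb{U}_i$, and the statement for $\tau\in[0,\Delta t)$ extends to $\tau=\Delta t$ by continuity of $\vm \Psi$ and compactness (closedness) of $\mathbb{X}_0$. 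Then \eqref{eq:bound_x_DMPC_x_MPC} of Lemma~\ref{lem:diff_states}, which applies because $\vm p\indq(\cdot;\vm \xi)\in\mathcal{B}_{r_1}(\vm p\inds(\cdot;\vm \xi))$, gives
\begin{align*}
|V(\vm \xi_+)-V(\vm \xi_+\inds)| \leq L_V\|\vm \xi_+-\vm \xi_+\inds\|_b \leq L_V b_1^\prime\, e\indq(\vm \xi).
\end{align*}
Adding the two bounds yields \eqref{eq:perturbed_Lyapunov} with $b_1 := L_V b_1^\prime$.

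The delicate step is making the constants uniform in $\vm \xi\in\mathbb{X}_\alpha$. Lemma~\ref{lem:solvability_regularity_local_ocps} is stated pointwise in $\vm \xi$, so $L$ (and hence $b_1^\prime$) could a priori depend on $\vm \xi$. I would handle this by a compactness argument over $\mathbb{X}_\alpha$, a closed subset of the compact set $\mathbb{X}_0$, taking the supremum of the local Lipschitz constants. This relies on the constants in \cite[Thm.~5]{Dontchev} depending continuously on the data, or, as the paper implicitly does in Lemma~\ref{lem:diff_states}, on treating $L$, $L_{f,x}$, $L_{f,u}$ as uniform. The remaining steps are routine.
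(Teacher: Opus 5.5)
Your proposal is correct and follows the paper's proof essentially verbatim. It bounds $V(\vm \xi_+)\leq V(\vm \xi_+\inds)+L_V\|\vm \xi_+-\vm \xi_+\inds\|_b$, handles the nominal term with \eqref{eq:Lyapunov_1}--\eqref{eq:Lyapunov_2} and the perturbation with \eqref{eq:bound_x_DMPC_x_MPC}, and arrives at the same constant $b_1=L_V b_1^\prime$. Your extra checks go beyond what the paper spells out: that $\vm \xi_+,\vm \xi_+\inds\in\mathbb{X}_0$ via Assumption~\ref{ass:Lipschitz_solutions} and closedness of $\mathbb{X}_0$, and that $L$ (hence $b_1^\prime$) must be uniform over $\mathbb{X}_\alpha$, which the paper leaves implicit.
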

\begin{pf}
Under Assumption \ref{ass:nominal_MPC_stability} there exists a constant $L_V \in \mathbb{R}_{>0}$ such that $ |V(\vm \xi^\prime) - V(\vm \xi)|\leq L_V \| \vm \xi^\prime - \vm \xi\|_b$ holds for any $\vm \xi^\prime, \vm \xi \in \mathbb{X}_0$ which implies
    \begin{align}
        V(\vm \xi_+)\!\leq\! V(\vm \xi_+\inds) \!+\! L_V \|\vm \xi_+ - \vm \xi_+\inds\|_b \!\leq\! (1 - a) V(\vm \xi) \!+\! b_1 e\indq(\vm \xi) \nonumber 
    \end{align}
with Assumption \ref{ass:nominal_MPC_stability} and \eqref{eq:bound_x_DMPC_x_MPC} in Lemma \ref{lem:diff_states} being applied. This proves \eqref{eq:perturbed_Lyapunov} with $b_1 = L_Vb_1^\prime$.\qed
\end{pf}
The following result generalizes the convergence result~\eqref{eq:R_linear_convergence} in Theorem \ref{thm:conv_SBDP} to the case where the evolution of the initial state $\vm \xi$ is governed by the DMPC control law~\eqref{eq:DMPC_control_law} and gives an estimate of the error $e^q(\vm \xi_+)$ in terms of the previous error $e^q(\vm \xi)$ and the value function $V(\thisstate)$. 

\begin{lem} \label{lem:perturbed_contraction}
Let Assumptions \ref{ass:asynchronism}, \ref{ass:coercivity}, \ref{ass:nominal_MPC_stability}, and \ref{ass:Lipschitz_solutions} hold. Then, there exist constants $\bar \alpha, r_2, b_2, b_3 \in \mathbb{R}_{>0}$ such that for any $\vm \xi \in \mathbb{X}_{\bar \alpha}$ and $\vm p\indq(\cdot;\vm \xi) \in \mathcal{B}_{r_2}(\vm p\inds(\cdot; \vm \xi))$, we have
\begin{align}\label{eq:perturbed_contraction}
 e\indq(\vm \xi_+) &\leq  \contr (b_2 \sqrt{V(\vm\xi)} + b_3  e\indq(\vm \xi))\,.
\end{align}
\end{lem}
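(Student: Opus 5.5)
The plan is to combine the warm-start relation of Algorithm~\ref{alg:SBDP_cont_local_async} with the R-linear estimate \eqref{eq:R_linear_convergence} of Theorem~\ref{thm:conv_SBDP}, applied at the successor state $\vm \xi_+ = \vm \Psi(\Delta t; \vm \xi, \vm{\hat u}(\cdot;\vm \xi))$. Here $\vm{\hat u}(\cdot;\vm\xi)$ is generated from the iterate $\vm p\indq(\cdot;\vm\xi)$. At the next MPC step, a-SBDP is initialized with $\vm p^0(\cdot;\vm \xi_+) = \vm p\indq(\cdot;\vm \xi)$. If this initialization lies in $\mathcal{B}_{r_1}(\vm p\inds(\cdot;\vm\xi_+))$, Theorem~\ref{thm:conv_SBDP} gives, after $q$ further update events,
\begin{align*}
e\indq(\vm\xi_+) \leq \contr e^0(\vm \xi_+)\,.
\end{align*}
It therefore suffices to bound the initial error $e^0(\vm\xi_+)$ in terms of $e\indq(\vm\xi)$ and $\sqrt{V(\vm\xi)}$.

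\emph{Step 1: bound on the initial error.} By the triangle inequality and \eqref{eq:Lipschitz_primal_dual_solution} in Assumption~\ref{ass:Lipschitz_solutions},
\begin{align*}
e^0(\vm\xi_+) &\leq \|\vm p\indq(\cdot;\vm\xi) - \vm p\inds(\cdot;\vm\xi)\|_{b,\infty} + \|\vm p\inds(\cdot;\vm\xi) - \vm p\inds(\cdot;\vm\xi_+)\|_{b,\infty} \\
&\leq e\indq(\vm\xi) + L_p\|\vm\xi_+ - \vm\xi\|_b\,.
\end{align*}
The estimate \eqref{eq:Lipschitz_primal_dual_solution} applies because $\vm\xi\in\mathbb X_\alpha$ and $\vm\xi_+\in\mathbb X_0$, the latter by the last part of Assumption~\ref{ass:Lipschitz_solutions}. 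Inserting \eqref{eq:diff_states} of Lemma~\ref{lem:diff_states} gives
\begin{align*}
e^0(\vm\xi_+) \leq L_p b_2^\prime\|\vm\xi\|_b + (1+L_pb_1^\prime)\, e\indq(\vm\xi)\,.
\end{align*}
Finally, \eqref{eq:Lyapunov_1} yields $\|\vm\xi\|_b\leq \sqrt{V(\vm\xi)/a_1}$. This produces the claimed constants $b_2 = L_pb_2^\prime/\sqrt{a_1}$ and $b_3 = 1+L_pb_1^\prime$.

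\emph{Step 2: staying in the domain of validity.} This is the step I expect to be the main obstacle, since three conditions must hold simultaneously. First, Lemma~\ref{lem:diff_states} requires $\vm p\indq(\cdot;\vm\xi)\in\mathcal B_{r_1}(\vm p\inds(\cdot;\vm\xi))$. Second, $\vm\xi_+$ must be an admissible initial state, i.e.\ lie in $\mathbb X_0$, for Theorem~\ref{thm:conv_SBDP} to apply. Third, the warm start must satisfy $e^0(\vm\xi_+) < r_1$. I would choose $\bar\alpha\leq\alpha$ and $r_2\leq r_1$ small enough that
\begin{align*}
b_2\sqrt{\bar\alpha} + b_3 r_2 < r_1\,,
\end{align*}
for instance $b_2\sqrt{\bar\alpha}\leq r_1/2$ and $b_3r_2 < r_1/2$. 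With this choice, Step~1 directly gives $e^0(\vm\xi_+) < r_1$, so Theorem~\ref{thm:conv_SBDP} is applicable at $\vm\xi_+$. Its invariance statement then keeps all subsequent iterates in $\mathcal B_{r_1}(\vm p\inds(\cdot;\vm\xi_+))$, which justifies the R-linear estimate for every $q$.

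A subtle point is that $r_1$, $L$, and $C$ in Lemma~\ref{lem:solvability_regularity_local_ocps} and Theorem~\ref{thm:conv_SBDP} are stated pointwise in $\vm\xi$. I would take them uniform over the compact set $\mathbb X_0$, either by a compactness argument or as implicitly assumed, so that one pair $(\bar\alpha, r_2)$ works for all $\vm\xi\in\mathbb X_{\bar\alpha}$.

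Combining the R-linear estimate with the bound from Step~1 gives \eqref{eq:perturbed_contraction}.
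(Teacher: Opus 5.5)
Your proposal is correct and follows essentially the same route as the paper. Like the paper, you use the warm start $\vm p^0(\cdot;\vm\xi_+)=\vm p^q(\cdot;\vm\xi)$, bound $e^0(\vm\xi_+)$ via the triangle inequality with \eqref{eq:Lipschitz_primal_dual_solution}, Lemma~\ref{lem:diff_states} and \eqref{eq:Lyapunov_1}, and then apply \eqref{eq:R_linear_convergence} at $\vm\xi_+$ after choosing $\bar\alpha\le\alpha$ and $r_2\le r_1$ so that the warm start lies in $\mathcal B_{r_1}(\vm p\inds(\cdot;\vm\xi_+))$.

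Your constants $b_2=L_pb_2'/\sqrt{a_1}$ and $b_3=1+L_pb_1'$, and your smallness condition $b_2\sqrt{\bar\alpha}+b_3r_2<r_1$, are in fact the consistent ones. The paper's stated $b_2=L_pb_2'\sqrt{a_2}$ and its choice of $\bar\alpha$ and $r_2$ appear to be slips: that choice drops the factor $b_3$ on $e^q(\vm\xi)$ and gives $r_2=0$ whenever the first term in the minimum defining $\bar\alpha$ is active.
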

\begin{pf}
For $\bar \alpha:= \min\{ \frac{r_1}{L_p b_2^\prime \sqrt{a_1}}, \alpha\}>0$ and $r_2 :=r_1 - L_p b_2^\prime \sqrt{a_1} \bar \alpha>0$, we get that $e^0(\vm \xi_+) \in \mathcal{B}_{r_1}(\vm p\inds(\cdot; \vm \xi_+))$ and can apply \eqref{eq:R_linear_convergence} from Theorem~\ref{thm:conv_SBDP} at $\vm \xi_+$, i.e., 
\begin{align} \label{eq:perturbed_contraction_deriv}
        e\indq(\vm \xi_+)&\leq \contr (e^q(\vm \xi) + \| \vm p\inds(\cdot; \vm \xi_+) - \vm p\inds(\cdot; \vm \xi)\|_{b,\infty}) \nonumber\\
        & \leq \contr ( e^q(\vm \xi) + L_p \|\vm \xi_+ - \vm \xi \|_b)\,,
    \end{align}
using the warm start and \eqref{eq:Lipschitz_primal_dual_solution} in
Assumption~\ref{ass:Lipschitz_solutions}. Combining \eqref{eq:perturbed_contraction_deriv} with
Lemma~\ref{lem:diff_states} and \eqref{eq:Lyapunov_1} from
Assumption~\ref{ass:nominal_MPC_stability} yields \eqref{eq:perturbed_contraction} with
$b_2=L_p b_2'\sqrt{a_2}$ and $b_3=1+L_p b_3'$. \qed
\end{pf}
The error bounds given in Lemmas \ref{lem:perturbed_Lyapunov} and \ref{lem:perturbed_contraction} define a discrete-time dynamical system which will be used to infer stability properties of the closed loop system. Let $w_1[k]: = V(\vm x_k)$ and $w_2[k]:= e\indq(\vm x_k)$ denote the value function and optimization error at MPC step $k$, respectively. Then, by \eqref{eq:perturbed_Lyapunov} and \eqref{eq:perturbed_contraction}, the system
\begin{subequations} \label{eq:discrete_time_dynamics}
    \begin{align}
 w_1[k+1] &= (1- a) w_1[k] + b_1 w_2[k] \\
  w_2[k+1] &=  \contr (b_2 \sqrt{w_1[k]} + b_3 w_2[k])\,,
    \end{align}
\end{subequations}
with $w_1[0] = \bar \alpha$ and $w_2[0] = \contr r_2$, provides an upper bound on the evolution of the quantities $V(\vm x_k)$ and $e^q(\vm x_k)$ along the closed loop trajectory. The next lemma establishes a positive invariant set for the closed loop extended state $\vm z[k]:= (\vm x_k, \vm p\indq(\cdot; \vm x_k))$ using \eqref{eq:discrete_time_dynamics}. 
\begin{lem} \label{lem:invariant_set}
Let Assumptions \ref{ass:asynchronism}, \ref{ass:coercivity}, \ref{ass:nominal_MPC_stability} and \ref{ass:Lipschitz_solutions} hold and let $ r:= \min\{\frac{\bar \alpha a_3}{b_1a_1}, r_2\}$. Moreover, define the set
\begin{multline} \label{eq:invariant_set}
    \Gamma:=\{ \vm \xi \in \mathbb{R}^{n},\, \vm p(\cdot) \in C([0,T]; \mathbb{R}^p) \, | \, \\ V(\vm \xi) \leq \bar \alpha,\, \| \vm p(\cdot) - \vm p\inds(\cdot; \vm \xi)\|_{b,\infty}\leq r\}\,.
\end{multline}
If the global number of per-agent iterations satisfies 
    \begin{align} \label{eq:invariant_set_num_iterations}
        q > q_1:= \bigg(1 + \log_{C}\bigg(\frac{r}{b_2 \sqrt{\bar \alpha}+ b_3 r}\bigg)\bigg)(s+d)\,,
    \end{align}
then $\Gamma $ is positive invariant for the closed loop system in the sense of $ \vm z[k] \in \Gamma$ implies $ \vm z[k+1] \in \Gamma$, $k=0,1,\dots$
\end{lem}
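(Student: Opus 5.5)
Assume $\vm z[k] = (\vm x_k, \vm p^q(\cdot;\vm x_k)) \in \Gamma$, i.e., $V(\vm x_k)\le \bar\alpha$ and $e^q(\vm x_k)\le r$. The task is to verify the two defining inequalities of $\Gamma$ at step $k+1$, using Lemma~\ref{lem:perturbed_Lyapunov} for the value function and Lemma~\ref{lem:perturbed_contraction} for the optimization error. Lemma~\ref{lem:perturbed_contraction} bounds the error after $q$ a-SBDP iterations warm-started from the previous iterate. Since $r\le r_2\le r_1$ and $\bar\alpha\le\alpha$, both lemmas apply at $\vm\xi=\vm x_k$: $\vm x_k\in\mathbb X_{\bar\alpha}\subseteq\mathbb X_\alpha$ and $\vm p^q(\cdot;\vm x_k)\in\mathcal B_{r_2}(\vm p^*(\cdot;\vm x_k))$. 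Boundary points with equality $e^q = r$ are handled by the usual open/closed-ball slack, or by a marginally smaller $r$.

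For the value function, \eqref{eq:perturbed_Lyapunov} gives $V(\vm x_{k+1})\le (1-a)V(\vm x_k)+b_1 e^q(\vm x_k)\le (1-a)\bar\alpha + b_1 r$. Invariance requires $b_1 r\le a\bar\alpha = \frac{a_3}{a_2}\bar\alpha$. The choice $r\le \frac{\bar\alpha a_3}{b_1 a_1}$ in the statement is stated with $a_1$, not $a_2$. One could argue $a_1\le a_2$ and that the intended constant uses $a_2$, or instead use the decrease \eqref{eq:Lyapunov_2} directly: $V(\vm\xi_+^*)\le V(\vm\xi)-a_3\|\vm\xi\|_b^2\le V(\vm\xi)(1-a_3/a_2)$. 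I would state the requirement as $b_1 r\le a\bar\alpha$ and note that the choice of $r$ is meant to enforce it. This is the step I expect to need care, since the constant has to be reconciled. Given that inequality, $V(\vm x_{k+1})\le\bar\alpha$, so $\vm x_{k+1}\in\mathbb X_{\bar\alpha}$.

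For the error, \eqref{eq:perturbed_contraction} yields $e^q(\vm x_{k+1})\le C^{\lfloor q/(s+d)\rfloor}\bigl(b_2\sqrt{\bar\alpha}+b_3 r\bigr)$, using monotonicity in $V(\vm x_k)\le\bar\alpha$ and $e^q(\vm x_k)\le r$. It therefore suffices that $C^{\lfloor q/(s+d)\rfloor}\le \frac{r}{b_2\sqrt{\bar\alpha}+b_3 r}$. Because $0<C<1$, taking $\log_C$ reverses the inequality, so this is equivalent to $\lfloor q/(s+d)\rfloor \ge \log_C\bigl(\frac{r}{b_2\sqrt{\bar\alpha}+b_3r}\bigr)$. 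Using $\lfloor x\rfloor > x-1$, the condition holds whenever $q/(s+d)-1\ge \log_C(\cdot)$, i.e., $q\ge (1+\log_C(\cdot))(s+d)=q_1$. The strict inequality $q>q_1$ in \eqref{eq:invariant_set_num_iterations} therefore suffices. The argument of the logarithm is less than $1$ since $b_3\ge1$, so the logarithm is positive and $q_1$ is meaningful.

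Combining both bounds gives $\vm z[k+1]\in\Gamma$. Induction over $k$ completes the positive invariance claim. It also guarantees that the warm start at each step stays within the region where Theorem~\ref{thm:conv_SBDP} and Lemma~\ref{lem:perturbed_contraction} are valid.
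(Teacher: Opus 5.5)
Your proposal is correct and follows the paper's proof: Lemma~\ref{lem:perturbed_Lyapunov} gives $V(\vm x_{k+1})\le(1-a)\bar\alpha+b_1 r\le\bar\alpha$, and Lemma~\ref{lem:perturbed_contraction} together with $q>q_1$ gives $e^q(\vm x_{k+1})\le r$; your floor/logarithm computation makes explicit what the paper leaves implicit. You are also right to flag the constant: the paper's proof silently uses $b_1 r\le a\bar\alpha=\bar\alpha a_3/a_2$, which the stated $r\le\bar\alpha a_3/(b_1 a_1)$ does not guarantee, because $a_1\le a_2$ makes that bound weaker, so $a_1$ must be read as $a_2$ (your remark ``one could argue $a_1\le a_2$'' does not rescue the literal constant).
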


\begin{pf}
Let $\vm z[k] \in \Gamma$. Then, $\vm x_k \in \mathbb{X}_{\bar \alpha}$ and  $\vm p\indq(\cdot; \vm x_k) \in \mathcal{B}_{r_1}(\vm p\inds(\cdot; \vm x_k)) $ which allows us to apply Lemma \ref{lem:perturbed_Lyapunov}
    \begin{align}
        V(\vm x_{k+1}) &\leq (1- a) V(\vm x_k) + b_1 e\indq(\vm x_k) \nonumber\\
        &\leq (1 - a) \bar \alpha + \bar \alpha a = \bar \alpha 
    \end{align}
    which implies $\vm x_{k+1} \in \mathbb{X}_{\bar \alpha}$. Moreover, Lemma \ref{lem:perturbed_contraction} yields
   \begin{align}
    e\indq(\vm x_{k+1})  &\leq  \contr (b_2 \sqrt{V(\vm x_k)} + b_3  e\indq(\vm x_{k}))\nonumber\\
  &\leq  \contr (b_2 \sqrt{\bar \alpha} + b_3 r)
  \leq r\,,
\end{align}
where \eqref{eq:invariant_set_num_iterations} is used, implying $e\indq(\vm x_{k+1}) \leq r$. \qed
\end{pf}
The dynamics \eqref{eq:discrete_time_dynamics} are difficult to analyze with standard tools as they are not Lipschitz continuous at the origin. However, we can rewrite Lemma \ref{lem:perturbed_Lyapunov} in terms of $\sqrt{V(\vm\xi)}$ instead of $V(\vm \xi)$ to obtain a system which is linear.
\begin{lem} \label{lem:Lypunov_decrease_2}
    Let Assumptions \ref{ass:coercivity}, \ref{ass:nominal_MPC_stability}, and \ref{ass:Lipschitz_solutions} hold. Then, there exists a constant $\bar b_1 \in \mathbb{R}_{>0}$ such that for any $\vm \xi \in \mathbb{X}_{\alpha}$ and $\vm p\indq(\cdot; \vm \xi) \in \mathcal{B}_{r_1}(\vm p\inds(\cdot; \vm \xi))$, we have 
\begin{align}\label{eq:Lyapunov_decrease_2}
      \sqrt{V(\vm \xi_+)} \leq \sqrt{(1- a)} \sqrt{V(\vm \xi)}+ \bar b_1 e\indq(\vm \xi)\,.
\end{align}
\end{lem}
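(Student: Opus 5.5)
The plan is to bypass the squared form \eqref{eq:perturbed_Lyapunov} and return to the two ingredients behind Lemma~\ref{lem:perturbed_Lyapunov}, using the square root of the value function as the working quantity. The key observation is that $\sqrt{V(\cdot)}$ is itself a natural Lyapunov-type function. It is comparable to $\|\vm \xi\|_b$ by \eqref{eq:Lyapunov_1}, namely $\sqrt{a_1}\|\vm\xi\|_b \le \sqrt{V(\vm\xi)}\le \sqrt{a_2}\|\vm\xi\|_b$. Under the nominal optimal input it decreases by the factor $\sqrt{1-a}$, since \eqref{eq:Lyapunov_2} together with $V(\vm\xi)\le a_2\|\vm\xi\|_b^2$ gives $V(\vm\xi_+\inds)\le (1-a)V(\vm\xi)$ with $a=a_3/a_2$. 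Taking square roots of the latter yields
\begin{align}
\sqrt{V(\vm\xi_+\inds)}\le \sqrt{1-a}\,\sqrt{V(\vm\xi)}\,. \nonumber
\end{align}

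The remaining task is to bound the perturbation $\sqrt{V(\vm\xi_+)}-\sqrt{V(\vm\xi_+\inds)}$ linearly in $\|\vm\xi_+-\vm\xi_+\inds\|_b$. Lemma~\ref{lem:diff_states}, inequality \eqref{eq:bound_x_DMPC_x_MPC}, then turns this into a multiple of $e\indq(\vm\xi)$. Note that directly using $\sqrt{x+y}\le\sqrt x+\sqrt y$ on \eqref{eq:perturbed_Lyapunov} would only give $\sqrt{e\indq}$, which is not linear, so this is not the route. Instead, I will establish that $\sqrt{V}$ is Lipschitz on $\mathbb{X}_0$, i.e.\ $|\sqrt{V(\vm\xi')}-\sqrt{V(\vm\xi)}|\le \bar L_V\|\vm\xi'-\vm\xi\|_b$. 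Given this, the combination
\begin{align}
\sqrt{V(\vm\xi_+)}\le \sqrt{V(\vm\xi_+\inds)}+\bar L_V b_1'\, e\indq(\vm\xi) \nonumber
\end{align}
yields \eqref{eq:Lyapunov_decrease_2} with $\bar b_1=\bar L_V b_1'$. (Also $V\ge 0$ on $\mathbb{X}_0$, needed for the square root to be defined, follows from positivity outside $\mathbb{X}_\alpha$ and \eqref{eq:Lyapunov_1} inside.)

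The main obstacle is the Lipschitz continuity of $\sqrt V$, which is not implied by Lipschitz continuity of $V$ near the origin. I would split into two regions. Away from the origin, on $\mathbb{X}_0\setminus\mathcal{B}_\varepsilon(\vm 0)$, $V$ is bounded below by a positive constant $c_\varepsilon$. This holds because $V>0$ on the compact set considered, via \eqref{eq:Lyapunov_1} on $\mathbb{X}_\alpha$ and the assumption $V>0$ outside $\mathbb{X}_\alpha$, with continuity of $V$. There $|\sqrt{V'}-\sqrt{V}|=|V'-V|/(\sqrt{V'}+\sqrt V)\le L_V\|\vm\xi'-\vm\xi\|_b/(2\sqrt{c_\varepsilon})$. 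Near the origin, I use that $V$ is $C^2$ at $\vm 0$ with $V(\vm 0)=0$ and $\nabla V(\vm 0)=\vm 0$ (a minimum). Combined with the quadratic lower bound \eqref{eq:Lyapunov_1}, this gives $\|\nabla V(\vm\xi)\|\le c\|\vm\xi\|_b$ and $V(\vm\xi)\ge a_1\|\vm\xi\|_b^2$. Hence $\|\nabla\sqrt{V}\|=\|\nabla V\|/(2\sqrt V)\le c/(2\sqrt{a_1})$ is bounded, and integrating along segments gives a local Lipschitz constant. Mixed pairs, with one point near and one far, are handled by the reverse triangle inequality through the two bounds, or by a chaining argument along the segment where it stays in $\mathbb{X}_0$.

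If the gradient argument near $\vm 0$ proves delicate (since $C^2$ is only assumed at the origin), a fallback is to use only points actually needed. These are $\vm\xi_+$ and $\vm\xi_+\inds$, which are close by \eqref{eq:bound_x_DMPC_x_MPC}. The estimate $|\sqrt{V'}-\sqrt V|\le |V'-V|/\sqrt{V}$ then handles the case where $\sqrt{V(\vm\xi_+\inds)}$ dominates the error. Otherwise $\sqrt{V(\vm\xi_+)}\le\sqrt{a_2}\|\vm\xi_+\|_b$ is bounded using \eqref{eq:bound_x_DMPC_x_MPC} and $\|\vm\xi_+\inds\|_b\lesssim e\indq$ directly.
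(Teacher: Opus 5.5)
Your proposal is correct and follows the paper's route: show that $\sqrt{V}$ is Lipschitz on $\mathbb{X}_0$, then combine $\sqrt{V(\vm\xi_+\inds)}\le\sqrt{1-a}\,\sqrt{V(\vm\xi)}$ with \eqref{eq:bound_x_DMPC_x_MPC} to obtain $\bar b_1=\bar L_V b_1'$, as the paper does. Your near-origin argument, which uses the $C^2$ property at $\vm 0$ and $\nabla V(\vm 0)=\vm 0$, is in fact a more careful justification than the paper's bare constant $\max\{L_V/(2\sqrt{a_1}),L_V/(2\sqrt{\bar\delta})\}$, because Lipschitz continuity of $V$ alone does not make $\sqrt{V}$ Lipschitz at the origin. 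Drop the fallback, however: with a case split at a threshold proportional to $e\indq(\vm\xi)$, the bound $|V(\vm\xi_+)-V(\vm\xi_+\inds)|/\sqrt{V(\vm\xi_+\inds)}$ is only $\mathcal{O}(1)$ rather than linear in $e\indq(\vm\xi)$, and with a constant threshold the remaining near-origin case is exactly the one that needs the $C^2$ regularity.
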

\begin{pf}
Assumption \ref{ass:nominal_MPC_stability} implies that $\sqrt{V(\vm \xi)}$ is Lipschitz on $\mathbb{X}_0$ with Lipschitz constant $\bar L_V =\max\{\frac{L_V}{2\sqrt{a_1}},\frac{L_V}{2\sqrt{\bar \delta}}\}$ with $\bar \delta = \min_{\vm \xi \in \mathbb{X}_0 \setminus \mathbb{X}_\alpha} V(\vm \xi)>0$ such that $ | \sqrt{V(\vm \xi^\prime)} - \sqrt{ V(\vm \xi)}| \leq \bar L_V \| \vm \xi^\prime - \vm \xi\|_b$ holds for any $\vm \xi^\prime, \vm \xi \in \mathbb{X}_0$. The rest of the proof directly follows along the lines of the proof of Lemma \ref{lem:perturbed_Lyapunov} with $\bar b_1 = b_1^\prime \bar L_V$. \qed
\end{pf}
Utilizing the Lyapunov decrease estimate \eqref{eq:Lyapunov_decrease_2} instead of \eqref{eq:perturbed_Lyapunov}, consider the recursion defined by \eqref{eq:perturbed_contraction} and \eqref{eq:Lyapunov_decrease_2} which can be cast as the linear discrete-time system 
\begin{subequations} \label{eq:auxiliary_discrete_time_dynamics}
    \begin{align}
 \hat w_1[k+1] &= \sqrt{(1- a)} \hat w_1[k] + \bar b_1 \hat w_2[k] \\
  \hat w_2[k+1] &=  \contr (b_2 \hat w_1[k] + b_3 \hat w_2[k])
    \end{align}
\end{subequations}
with states $\hat w_1[k] = \sqrt{V(\vm x_k)}$ and $\hat w_2[k] = e\indq(\vm x_k)$ as well as initial conditions $\hat w_1[0] = \sqrt{\bar \alpha}$ and $\hat w_2[0] = \contr r$. The next theorem exploits the linearity of \eqref{eq:auxiliary_discrete_time_dynamics} to derive conditions for the closed loop stability. 
\begin{thm} \label{thm:DMPC_stability}
Let Assumptions \ref{ass:asynchronism}, \ref{ass:coercivity}, \ref{ass:nominal_MPC_stability}, and \ref{ass:Lipschitz_solutions} hold. Then, for $\vm z[0] \in \Gamma$ and the global number of per-agent iterations satisfying $ \bar q > \max\{q_1,q_2\}$ with 
    \begin{align}\label{eq:bound2_q}
         q_2 := \bigg(1 + \log_{C}\bigg(\frac{\tilde a}{\tilde a b_3 + \bar b_1b_2}\bigg)\bigg)(s+d)
    \end{align}
and  $\tilde a := 1 - \sqrt{(1 - a)} \in (0,1)$, the origin of the closed loop system is exponentially asymptotically stable under the control law \eqref{eq:DMPC_control_law}. Moreover, the value function $V(\vm x_k)$ and optimization error $e\indq(\vm x_k)$ decay exponentially.  
\end{thm}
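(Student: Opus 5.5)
The plan is to combine the invariance result of Lemma~\ref{lem:invariant_set} with a spectral analysis of the linear comparison system \eqref{eq:auxiliary_discrete_time_dynamics}.

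\emph{Step 1: invariance.} Since $\bar q > q_1$, Lemma~\ref{lem:invariant_set} guarantees that $\vm z[k]\in\Gamma$ for all $k\geq 0$. Hence at every MPC step we have $\vm x_k\in\mathbb{X}_{\bar\alpha}\subseteq\mathbb{X}_\alpha$ and $\vm p^{\bar q}(\cdot;\vm x_k)\in\mathcal{B}_{r_2}(\vm p\inds(\cdot;\vm x_k))\subseteq\mathcal{B}_{r_1}(\vm p\inds(\cdot;\vm x_k))$. This means Lemmas~\ref{lem:perturbed_contraction} and~\ref{lem:Lypunov_decrease_2} apply at every step. Together they give the componentwise inequality $\hat{\vm w}[k+1]\leq \vm A\,\hat{\vm w}[k]$, where $\hat{\vm w}[k]=(\sqrt{V(\vm x_k)},e^{\bar q}(\vm x_k))$ and
\begin{align}
\vm A=\begin{bmatrix}\sqrt{1-a} & \bar b_1\\ c\, b_2 & c\, b_3\end{bmatrix},\qquad c:=C^{\lfloor \bar q/(s+d)\rfloor}.
\end{align}
All entries of $\vm A$ are nonnegative, so $\vm A$ preserves the componentwise order on $\mathbb{R}^2_{\geq0}$. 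By induction, $\hat{\vm w}[k]\leq \vm A^k\hat{\vm w}[0]$.

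\emph{Step 2: Schur stability of $\vm A$.} This is the main obstacle. For a nonnegative $2\times2$ matrix whose diagonal entries are below one, $\spec(\vm A)$ lies in the open unit disk if and only if $\det(\vm I-\vm A)>0$. This is the standard M-matrix / Perron--Frobenius criterion; I will check it directly through the characteristic polynomial evaluated at $1$.

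The condition $\det(\vm I-\vm A)>0$ reads $\tilde a(1-c\,b_3)-\bar b_1 c\, b_2>0$, which is equivalent to
\begin{align}
c<\frac{\tilde a}{\tilde a b_3+\bar b_1 b_2}.
\end{align}
This ratio is strictly less than one provided $b_3\geq1$; that holds because $b_3=1+L_pb_3'$. The bound also forces $c\,b_3<1$, so the second diagonal entry is below one as required. Finally, $\bar q>q_2$ means $\lfloor \bar q/(s+d)\rfloor\geq \bar q/(s+d)-1>\log_C(\cdot)$, and since $C<1$ this gives exactly the displayed inequality. The floor function is the only delicate point, and it is precisely why the factor $(1+\log_C(\cdot))$ appears in $q_2$.

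Once $\det(\vm I-\vm A)>0$ is established, the Perron root $\rho:=\rho(\vm A)<1$. Hence there is $K>0$ with $\|\vm A^k\|\leq K\bar\rho^k$ for any $\bar\rho\in(\rho,1)$. Alternatively, one can take the positive left Perron eigenvector $\vm v$ and use the weighted sum $\vm v\trans\hat{\vm w}[k]$, which decays exactly like $\rho^k$ and avoids constants.

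\emph{Step 3: conclusion.} Step 2 gives $\sqrt{V(\vm x_k)}+e^{\bar q}(\vm x_k)\leq K\bar\rho^k\big(\sqrt{V(\vm x_0)}+e^{\bar q}(\vm x_0)\big)$. Therefore both $V(\vm x_k)$ and $e^{\bar q}(\vm x_k)$ decay exponentially, the former at rate $\bar\rho^{2k}$. Using \eqref{eq:Lyapunov_1}, we get $\|\vm x_k\|_b\leq a_1^{-1/2}\sqrt{V(\vm x_k)}$, which yields exponential convergence of the sampled states. Stability in the sense of Lyapunov follows because the bound is linear in the initial data, using $\sqrt{V(\vm x_0)}\leq\sqrt{a_2}\|\vm x_0\|_b$.

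For the intersample behavior, \eqref{eq:diff_states} in Lemma~\ref{lem:diff_states}, applied on $[t_k,t_k+\tau]$ via Gronwall, bounds $\|\vm x(t_k+\tau)\|_b$ by a constant times $\|\vm x_k\|_b+e^{\bar q}(\vm x_k)$. This extends exponential stability of the extended state $\vm z$ to the continuous-time closed loop.
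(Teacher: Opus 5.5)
Your proposal is correct and follows the same route as the paper: the linear comparison system \eqref{eq:auxiliary_discrete_time_dynamics}, Schur stability of its nonnegative system matrix via the sign of the characteristic polynomial at $1$ (which gives exactly the bound \eqref{eq:bound2_q}), then exponential decay of $\sqrt{V(\vm x_k)}$ and $e^{\bar q}(\vm x_k)$ and a sampled-data argument for the intersample behaviour. Your version is somewhat tighter than the paper's: you justify $\hat{\vm w}[k]\leq \vm A^k\hat{\vm w}[0]$ by monotonicity of a nonnegative matrix together with invariance of $\Gamma$ (using $\bar q>q_1$); you use the diagonal entries being below one to exclude two eigenvalues above one, which the paper's step $\lambda_1<1\Leftrightarrow p(1)>0$ leaves implicit; and you state the final exponential bound for the extended state $\vm z$, rather than absorbing the initial error $e^{\bar q}(\vm x_0)$ into a constant multiplying $\|\vm x_0\|$.
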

\begin{pf} 
We first establish exponential stability of
\eqref{eq:auxiliary_discrete_time_dynamics} under \eqref{eq:bound2_q}
and then use this result to prove exponential stability of the closed
loop under \eqref{eq:DMPC_control_law}.
Let $\vm A(q)$ denote the system matrix of
\eqref{eq:auxiliary_discrete_time_dynamics} and
$ p(\lambda) = \lambda^2-\mathrm{tr}(\vm A(q))\lambda+\det(\vm A(q))$
its characteristic polynomial. Since all entries of $\vm A(q)$ are
strictly positive, the Perron-Frobenius theorem implies that
$\lambda_1=\spec(\vm A(q))$ is a simple positive eigenvalue and
$\lambda_2<\lambda_1$. Hence, $\vm A(q)$ is Schur stable if and only if
$\lambda_1<1$, which is equivalent to $p(1)=(1-\lambda_1)(1-\lambda_2)>0$.
Since $\lambda_2<\lambda_1<1$ implies $(1-\lambda_2)>0$, Schur
stability follows from $ 1-\mathrm{tr}(\vm A(q))+\det(\vm A(q))>0$ ,
which yields \eqref{eq:bound2_q}. Therefore, the origin of
\eqref{eq:auxiliary_discrete_time_dynamics} is exponentially stable.
Consequently, there exist constants $c_0>0$ and $c_1\in(0,1)$ such that $
\|\hat{\vm w}[k]\| \leq c_0c_1^k\|\hat{\vm w}[0]\|$ with
$\hat{\vm w}
=
[\hat w_1,\hat w_2]\trans$. Thus, it holds that $
w_1[k]
\leq
c_0^2c_1^{2k} \| [\sqrt{w_1[0]},\,w_2[0]]\trans \|^2$
and
$ w_2[k] \leq c_0c_1^k \| [\sqrt{w_1[0]},\,w_2[0]]^\top \|$,
showing exponential decay of the value function and optimization error,
respectively. Consider now the closed loop system $
\dot{\vm x}(t)
=
\vm f(\vm x(t),\vm{\hat u}(t_k+\tau;\vm x_k))$, $\tau\in[0,\Delta t)$, $ t\in[t_k,t_{k+1}) $. 
Using \eqref{eq:Lyapunov_1} yields $
\|\vm x_k\|
\leq
\sqrt{M}a_1^{-1/2}\sqrt{w_1[k]}
\leq
c_2c_1^k\|\vm x_0\|$, with $c_2 = \sqrt{M}a_1^{-1/2}c_0(1+\|w_2[0]\|)$.
Standard sampled-data arguments then imply the existence of
$\gamma_1,\gamma_2>0$ such that$
\|\vm x(t)\|
\leq
\gamma_1\|\vm x_0\|e^{-\gamma_2 t}$, $t\geq 0$, establishing exponential stability of the closed loop system. \qed
\end{pf}

Theorem \ref{thm:DMPC_stability} establishes exponential stability of the asynchronous DMPC scheme provided that the number of global per-agent iterations $\bar q$ is sufficiently large. Since $\bar q$ corresponds to the total number of agent activations within one MPC step, there is a flexibility in how these activations are distributed among agents, e.g.,  $\bar q_i = \frac{\bar q}{M}$. 

%% file: content/validation.tex
\section{Simulative and experimental validation} \label{sec:validation}
To illustrate Theorem \ref{thm:DMPC_stability} and validate the asynchronous DMPC scheme, we consider the stabilization of
\begin{align} \label{eq:example_system}
    \dot x_i(t) = u_i(t) + \sum_{\neighs} \sin(x_j(t))\,, \quad x_i(0) = x_{i,0}
\end{align}
at the (unstable) origin with stage costs $l_i(x_i,u_i)=x_i^2+0.1 u_i^2$, input constraints $u_i(t)\in[-2,2]$, and $M=3$. The analysis consists of i) estimating the constants in Theorem~\ref{thm:DMPC_stability} to compute $\bar q$, and ii) a hardware-in-the-loop (HIL) validation where asynchronism naturally occurs.
 
\textbf{ i) Numerical validation:} Five representative initial conditions $|x_{i,0}|\leq 1$ are considered and the system \eqref{eq:example_system} is controlled using the optimal MPC law \eqref{eq:MPC_control_law} and the inexact DMPC law \eqref{eq:DMPC_control_law}. Without terminal costs, nominal stability is ensured via a sufficiently long horizon together with exponential controllability in the sense of \cite[Ass.~14]{Reble}. The required candidate control trajectories are chosen as open loop controls from each initial state over the simulation horizon, verifying the assumption for all cases. The resulting worst-case MPC contraction rate is $\alpha_{\mathrm{mpc}}=0.083$ (cf. \cite[Thm. 1]{Reble}) for $T=3$ and $\Delta t=0.1$.
The constants in Assumptions~\ref{ass:nominal_MPC_stability} and \ref{ass:Lipschitz_solutions} are obtained by evaluating $V(\vm x_k)$, $\vm u\inds(\cdot;\vm x_k)$, $\vm p\inds(\cdot;\vm x_k)$, and $\|\vm x_k\|_b$ along closed-loop trajectories, yielding $
a_1=0.31$, $a_2=0.96$, $a_3=0.12$, $L_p=1.62$, $L_u=4.62$, $
L_{f,u}=1.00$, $\quad L_{f,x}=2.00$, $\bar L_V=0.27$.
The constants $C=0.58$ (Theorem~\ref{thm:conv_SBDP}) and $L=16.65$ (Lemma~\ref{lem:solvability_regularity_local_ocps}) are computed from five synchronous iterations compared against the solution at $\vm x_k$. Assumption~\ref{ass:coercivity} holds with, e.g., $\rho_i=0.1$, since $Q_i(\tau)= 1$, $\timetau$. Finally, the resulting constants in Theorem \ref{thm:DMPC_stability} are $\bar b_1=0.55$, $b_2=1.29$, $b_3=4.30$, and $\tilde a=0.06$. Thus, for a synchronous execution $(s=1,\, d=0)$, $\bar q=7>q_2=6.01$ iterations guarantee stability, whereas in the asynchronous case the bound scales with the degree of asynchronism and becomes $q_2=6.01(s+d)$. We empirically verify that $\bar q=7>q_1$ as $\vm z[k]\in\Gamma$ due to $V(\vm x_k)\leq V(\vm x_0)$ and $ e\indq(\vm x_k) \leq e^q(\vm x_0) $.

\textbf{ii) HIL experiment:} The asynchronous SBDP-based DMPC scheme is implemented in C\texttt{++} and executed on a hardware network of Raspberry Pi~3B+ connected via Ethernet and Wi-Fi for $\vm x_0=[0.75,0.5,0.25]\trans$. A synchronous execution enforces the agents to wait until all neighboring data has arrived, while the asynchronous version directly proceeds with available neighbor data.
Communication is implemented over UDP using the \texttt{LCM} library based on a publish-subscribe paradigm. This matches the proposed framework, which tolerates message loss and exploits multicast communication in Algorithm~\ref{alg:SBDP_cont_local_async}. The local OCPs \eqref{eq:local_OCP_alg_async} are solved via a damped forward-backward sweep. Figure~\ref{fig:hist_comp_times} shows per-agent execution time histograms for $\bar q_i=7$ per-agent iterations for $10^3$ MPC steps. The asynchronous scheme reduces median and maximum execution times by approximately $15\,\%$ respective $19\,\%$ for Ethernet and $61\,\%$ respective $63\,\%$ for Wi-Fi, while only increasing the closed loop cost
$
J_{\mathrm{cl}}=\int_0^{T_{\mathrm{sim}}} l_i(x_i(t),u_i(t))\,\mathrm dt
$
by around $0.2\,\%$ for both Ethernet and Wi-Fi. In addition, approximately only $80\, \%$ of the MPC steps in the synchronous Wi-Fi experiment were successful due to package losses combined with UDP-based communication. In case of an unsuccessful MPC step, the simulation is restarted. This indicates that the asynchronous DMPC scheme also increases robustness w.r.t.\ lossy communication.
After the execution we estimate $\bar q$ from the Ethernet data. To reduce conservatism, we use $e\indq(\vm \xi) \leq C^{-1}\hat C^q e^0(\vm \xi)$, cf. \eqref{eq:R_linear_convergence}, with $\hat C = C^{\frac{1}{s+d}}$ and determine $\hat C$ directly from the asynchronous execution rather than of estimating the constants $s$ and $d$. This yields $q_1 =20.43 < \bar q=21 = \sum_{\agents} \bar q_i$, thereby also verifying Theorem \ref{thm:DMPC_stability} for the asynchronous execution. The code is available at \url{https://github.com/maximilianpierervonesch/async_sbdp_dmpc}. 
\begin{figure}[t]
\centering
\includegraphics{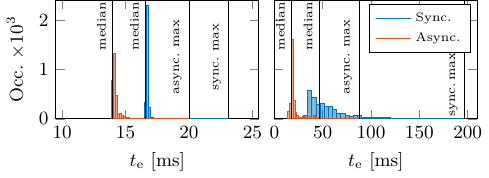}
\caption{Histogram of the execution time $t_{\mathrm{e}}$ per MPC step for (a)synchronous SBDP with a fixed number of $q_i=7$ per-agent iterations within an Ethernet (left) and Wi-Fi (right) network. For all four combinations, the closed loop cost $J_{cl}\approx 0.55$ is basically identical, illustrating that the a-SBDP achieves the same control performance in less time.} \label{fig:hist_comp_times}
\end{figure}
%.

%% file: content/conclusions.tex
\section{Conclusions} \label{sec:conclusion}
This paper presents a cooperative DMPC scheme based on a-SBDP for the setpoint stabilization of nonlinear systems. Asynchronous execution enables agents to perform iterations independently using outdated information, providing a practical framework for DMPC which is able to deal with, e.g., lossy communication or stragglers. Exponential stability of the closed loop system is established under a nominally stable MPC scheme and partial asynchronism. 

The trade-off inherent to asynchronous optimization, i.e., slower convergence vs. reduced execution time, is quantified in the context of MPC with a HIL-experiment, where the execution time is reduced by up to $19\,\%$ for Ethernet and $63\, \%$ for Wi-Fi with essentially unchanged control performance. Future research concerns more extensive experimental validations and extensions to state constraints. 